\title{Packing triangles in weighted graphs}
\author{
   Guillaume Chapuy\thanks{Current address: CNRS UMR 7089, Universit\'{e} Paris Diderot -- Paris 7, Case 7014, 75205 Paris Cedex 13, France.  Previously supported by a PIMS-CNRS postdoctoral fellowship.}
\and
   Matt DeVos\thanks{Supported in part by an NSERC Discovery Grant (Canada) and a Sloan Fellowship.}
\and
   Jessica McDonald\thanks{Current address: Department of Mathematics and Statistics, Auburn University, Auburn, AL, USA 36849.}
\and
   Bojan Mohar\thanks{Supported in part by an NSERC Discovery Grant (Canada), by the Canada Research Chair program, and by the Research Grant P1--0297 of ARRS (Slovenia).}~\thanks{On leave from: IMFM \& FMF, Department of Mathematics, University of Ljubljana, Ljubljana, Slovenia.}
\and
   Diego Scheide\thanks{Postdoctoral fellowship at Simon Fraser University, Burnaby.}
}
\date{}
\def\cA{\mathcal{A}}
\def\cB{\mathcal{B}}
\def\cI{\mathcal{I}}
\def\cJ{\mathcal{J}}
\def\cK{\mathcal{K}}
\def\cT{\mathcal{T}}
\def\sm{\setminus}
\newcommand{\set}[2]{\{#1 : #2\}}
\newcommand{\drawvertex}[1]{\filldraw[black] #1 circle (2mm); \draw[thick] #1 circle (2mm)}
\newcommand{\drawtext}[2]{\draw #1 node {\small #2}}
\theoremstyle{plain}
\newtheorem{theorem}{Theorem}[section]
\newtheorem{lemma}[theorem]{Lemma}
\newtheorem{corollary}[theorem]{Corollary}
\newtheorem{conjecture}[theorem]{Conjecture}
\theoremstyle{definition}
\theoremstyle{remark}
\newcommand\DEF[1]{\emph{#1\/}}
\newcommand\one{\mathbf{1}}
\begin{document}

\maketitle
\vspace{-1cm}
\begin{center}
  {Department of Mathematics}\\
  {Simon Fraser University}\\
  {Burnaby, B.C. V5A 1S6}
\end{center}

\begin{abstract}
  Tuza conjectured that for every graph $G$, the maximum size $\nu$ of a set of edge-disjoint triangles and minimum size $\tau$ of a set of edges meeting all triangles satisfy $\tau \leq 2\nu$. We consider an edge-weighted version of this conjecture, which amounts to packing and covering triangles in multigraphs. Several known results about the original problem are shown to be true in this context, and some are improved. In particular, we answer a question of Krivelevich who proved that $\tau \leq 2\nu^*$ (where $\nu^*$ is the fractional version of $\nu$), and asked if this is tight. We prove that $\tau \leq 2\nu^*-\frac{1}{\sqrt{6}}\sqrt{\nu^*}$ and show that this bound is essentially best possible.
\end{abstract}

\section{Introduction}

We shall assume in this paper that graphs are simple, and use the term \DEF{multigraph} when parallel edges are permitted.  Let $G = (V,E)$ be a graph and let $\cT=\cT(G)$ be the set of triangles of $G$. A \DEF{packing} is a set of edge-disjoint triangles and a \DEF{transversal} is a set of edges which meets every triangle. We define the following parameters:
\begin{align*}
\nu(G) &= \max\{|Z| : Z\subseteq \cT(G) \mbox{ is a packing in $G$}\}\quad \mbox{and} \\
\tau(G) &= \min\{|F| : F\subseteq E(G) \mbox{ is a transversal in $G$}\}.
\end{align*}
These are the usual packing and transversal parameters for the hypergraph with vertex set $E$ and hyperedges corresponding to $\cT$.

It is immediate that $\nu(G) \le \tau(G) \le 3 \nu(G)$ since, given a maximum set $\cT'$ of $\nu(G)$ edge-disjoint triangles, every transversal must contain at least one edge from each triangle in $\cT'$, and on the other hand, the set of all edges in $\cT'$ is a transversal. The following conjecture was proposed by Tuza \cite{Tuza1} in 1981. It asserts that the trivial upper bound $3\nu$ on $\tau$ can be improved.

\begin{conjecture}[Tuza]
\label{conj:tuza}
~$2 \nu(G) \ge \tau(G)$ for every graph $G$.
\end{conjecture}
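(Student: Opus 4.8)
The plan is to attack the conjecture directly by a minimal-counterexample analysis, because the fractional bound alone cannot reach $2\nu$: already $K_4$ has $\nu=1$ but $\nu^*=2$, so the factor $2$ in Krivelevich's $\tau\le 2\nu^*$ is forced by an integrality gap, and no comparison of $\tau$ with $\nu^*$ can close the distance to $2\nu$. Accordingly, suppose the conjecture fails and let $G$ minimize $|V(G)|+|E(G)|$ among counterexamples, so $\tau>2\nu$ while every proper subgraph $H$ satisfies $\tau(H)\le 2\nu(H)$. Minimality should force $G$ to be connected, to have every edge lying in a triangle (an edge in none could be deleted without changing $\tau$ or $\nu$), and to contain no vertex or edge whose removal decreases $\tau$ by at most $2$ while decreasing $\nu$ by at most $1$; verifying that each candidate local reduction satisfies $\Delta\tau\le 2\,\Delta\nu$ is the bookkeeping core of this step. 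In particular I would try to exclude vertices of small degree and pairs of triangles sharing an edge, driving $G$ toward a dense, locally uniform object.

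The main construction is the passage from a transversal to a packing via private triangles. Fix an inclusionwise-minimal transversal $F$ with $|F|=\tau$. By minimality of $F$, the set $F\sm\{e\}$ fails to be a transversal for each $e\in F$, so there is a triangle $T_e$ with $T_e\cap F=\{e\}$, and the other two edges of $T_e$ avoid $F$. This yields a family $\{T_e:e\in F\}$ of $\tau$ triangles meeting $F$ in pairwise distinct edges, so any two of them can conflict only on their $F$-free edges. Building the ``conflict graph'' on this family, where $T_e\sim T_{e'}$ when they share an $F$-free edge, the goal becomes extracting an edge-disjoint subfamily of size at least $\tau/2$, which would give $\nu\ge\tau/2$ and contradict $\tau>2\nu$. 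Thus everything reduces to showing that this conflict structure always admits an edge-disjoint subfamily covering at least half of the private triangles.

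For the extraction I would use a local-switching argument in the spirit of Haxell's method and of independent transversals. Maintain a current edge-disjoint subfamily; if it has fewer than $\tau/2$ triangles, averaging forces some $F$-free edge to carry too many unused private triangles, and I would swap a chosen triangle out for two that it blocks, strictly increasing the count. A defect form of Hall's condition, or equivalently a discharging scheme sending charge from uncovered private triangles to the blocking edges, should certify that such an augmentation exists unless the local picture is a copy of $K_4$ or $K_5$. Those are exactly the configurations with $\tau=2\nu$, so their presence in a strict counterexample would be ruled out by the reductions of the first step, completing the contradiction.

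The hard part, and the reason the conjecture has resisted proof, is precisely this final augmentation step: the known local method yields only $\tau\le(3-\tfrac{3}{23})\nu$, and squeezing the ratio down to exactly $2$ requires an exchange argument that never stalls except at the extremal $K_4$ and $K_5$ clusters. I expect the minimal-counterexample reductions and the private-triangle setup to be routine; the genuine obstacle is proving that whenever the packing-to-transversal ratio exceeds $2$ one can always locally augment the packing, equivalently that the fractional factor-$2$ integrality gap is realized only by the very configurations that simultaneously force $\tau=2\nu$.
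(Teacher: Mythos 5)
The statement you are trying to prove is Conjecture \ref{conj:tuza}, which the paper explicitly describes as ``wide open'': the paper contains no proof of it, only partial results (the fractional bounds of Theorem \ref{thm:Kriv} and their weighted sharpenings, the constant $2.92$ of Theorem \ref{thm:whaxell}, and the planar/surface case of Theorem \ref{thm:surface}). So there is no paper proof to compare against, and your proposal is not a proof either --- as you yourself concede in the final paragraph, the central augmentation step is left entirely unestablished, and that step carries the full weight of the conjecture. Concretely: from a minimal transversal $F$ you correctly obtain private triangles $T_e$ with $T_e \cap F = \{e\}$, pairwise overlapping only in $F$-free edges, but there is no lemma that extracts an edge-disjoint subfamily of size $\tau/2$ from this family. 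A single $F$-free edge can lie in many private triangles, creating large cliques in your conflict graph, and neither averaging, defect Hall, nor a discharging scheme is known to certify an augmenting swap in general; Haxell's local-switching machinery (which Section \ref{sec:tau vs nu} of the paper adapts to multigraphs) is exactly this kind of argument pushed as far as anyone knows how, and it stalls at ratio $3-\tfrac{3}{23}\approx 2.87$ (here $3-\tfrac{2}{25}=2.92$), not $2$.

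The surrounding scaffolding also has gaps. Your claimed dichotomy --- that the exchange argument stalls only at local copies of $K_4$ or $K_5$ --- is unsupported, and the intended exclusion of those configurations from a minimal counterexample does not work: a minimal counterexample has $\tau > 2\nu$, whereas $K_4$ and $K_5$ satisfy $\tau = 2\nu$, so their presence as \emph{subgraphs} contradicts nothing (indeed the known extremal examples for $\tau = 2\nu$ are built by gluing $K_4$'s and $K_5$'s, and they would have to appear in any near-extremal structure, not be forbidden from it). Likewise, the local reductions in your first step, of the form $\Delta\tau \le 2\,\Delta\nu$, are precisely what the paper's proof of Theorem \ref{thm:surface} carries out (its properties (1)--(4)), but there they suffice only because the surface embedding forces every edge inside a minimal separating triangle to lie in exactly two facial triangles; without an embedding, these reductions do not force any usable density or local uniformity, and in particular cannot exclude ``pairs of triangles sharing an edge'' --- excluding those would make every edge lie in at most one triangle and trivialize the problem, which no minimality argument can deliver. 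In short, the proposal is a reasonable research program whose every load-bearing step is either unproven or known to fall short of the factor $2$.
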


It is worthwhile to interpret $\nu(G)$ and $\tau(G)$ as solutions to integer programs, so let us pause to do so now.    Let $A$ be the edge-triangle incidence matrix of $G$, i.e., $A_{e,t} = 1$ if the triangle $t$ contains the edge $e$ and otherwise $A_{e,t} = 0$. Then we have:
\begin{align*}
\nu(G) &= \max \{ \langle\one,x\rangle : \mbox{$Ax \le \one$ and $x \in {\mathbb Z}^\cT_+$} \}, \\
\tau(G) &= \min\{ \langle\one,y\rangle : \mbox{$A^{\top}y \ge \one$ and $y \in {\mathbb Z}_+^E$} \},
\end{align*}
where $\one\in{\mathbb Z}^\cT$ or $\one\in{\mathbb Z}^E$ denotes the all-1 function (it is clear from the context which of the two possibilities applies), and the inner product $\langle\cdot,\cdot\rangle$ is the usual one, $\langle u,v\rangle=\sum_{t\in\cT}u(t)v(t)$ if $u,v\in{\mathbb Z}_+^\cT$ or $\sum_{e\in E}u(e)v(e)$ if $u,v\in{\mathbb Z}_+^E$.

Relaxing the integrality constraints, we find the following dual linear programs:
\begin{align}
\nu^*(G) &= \max \{ \langle\one,x\rangle : \mbox{$Ax \le \one$ and $x \in {\mathbb R}^T_+$} \},
\label{eq:s1}\\
\tau^*(G) &= \min\{ \langle\one,y\rangle : \mbox{$A^{\top}y \ge \one$ and $y \in {\mathbb R}_+^E$} \},
\label{eq:s2}
\end{align}
whose optimal values $\nu^*(G)$ and $\tau^*(G)$ are called the \DEF{fractional packing number} and \DEF{fractional transversal number}, respectively. This gives us the following meaningful chain of inequalities:
\[ \tau(G) \ge \tau^*(G) = \nu^*(G) \ge \nu(G). \]
Although Tuza's Conjecture~\ref{conj:tuza} remains wide open, there have been a number of useful partial results.  Below we highlight three of these.

\begin{theorem}[Krivelevich \cite{Kri}]
\label{thm:Kriv}
For every graph $G$ we have:
\begin{enumerate}[label=\textup{(\roman*)}, widest=iii, leftmargin=*]
\item $2 \nu(G) \ge \tau^*(G)$.
\item $2 \nu^*(G) \ge \tau(G)$.
\end{enumerate}
\end{theorem}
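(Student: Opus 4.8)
The plan is to prove, in each part, an improvement from the trivial factor $3$ to the asserted factor $2$. Recall the easy bounds: the $3\nu$ edges of any maximal packing form a transversal, giving at once $\tau^*(G)\le\tau(G)\le 3\nu(G)$ and $\tau(G)\le 3\nu^*(G)$. I will use strong LP duality, $\tau^*(G)=\nu^*(G)$, to pass between the two sides. For (i), since $\tau^*=\nu^*$, the claim $2\nu\ge\tau^*$ is equivalent to $\nu^*\le 2\nu$, i.e.\ the triangle-packing LP has integrality gap at most $2$; for (ii) the aim is to round an optimal fractional transversal to an integral one at a multiplicative cost of $2$.

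For part (ii) I would induct on $|E(G)|$. Let $y$ be an optimal fractional transversal, so $\sum_e y_e=\tau^*(G)=\nu^*(G)$ and $\sum_{e\in t}y_e\ge1$ for every triangle $t$; the base case with no triangle is trivial. If some edge $e$ has $y_e\ge\frac12$, I would place $e$ in the transversal and recurse on $G-e$. The restriction of $y$ is a fractional transversal of $G-e$, so $\tau^*(G-e)\le\tau^*(G)-y_e\le\tau^*(G)-\frac12$; since $F\cup\{e\}$ covers $G$ whenever $F$ covers $G-e$, the induction hypothesis (applied via $\nu^*(G-e)=\tau^*(G-e)$) yields $\tau(G)\le 1+\tau(G-e)\le 1+2\big(\tau^*(G)-\tfrac12\big)=2\tau^*(G)=2\nu^*(G)$.

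For part (i) I would start from a maximum integral packing $T_1,\dots,T_\nu$ with edge set $B=\bigcup_iE(T_i)$, $|B|=3\nu$; by maximality every triangle meets $B$. Taking all of $B$ only gives $\tau\le 3\nu$, and the loss comes entirely from triangles that meet $B$ in a single edge. The plan is to exploit maximality: if one packed triangle $T_i$ carried too rich a family of such privately attached triangles, one could swap $T_i$ for two edge-disjoint triangles and enlarge the packing, a contradiction. Converting this structural restriction into a fractional transversal of weight at most $2\nu$ (equivalently, rounding the optimal fractional packing down by a factor $2$) would give $\nu^*\le 2\nu$.

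The hard part in both parts is exactly the passage from $3$ to $2$, and it is a global rather than a local phenomenon. In (ii) the induction makes no progress once the chosen optimum has all coordinates $y_e<\frac12$, and this genuinely occurs: for $K_5$ the optimal fractional transversal is the unique point $y\equiv\frac13$ (complementary slackness against the packing $x\equiv\frac13$ forces every triangle constraint tight, and the $10\times10$ triangle--edge incidence matrix is nonsingular), so no optimal solution has a heavy edge. Here one must argue globally, e.g.\ by taking a maximal packing in the all-light graph, charging two of its edges per packed triangle, and recursing. Likewise, in (i) the saving cannot be found locally: covering every triangle through a single packed triangle can cost $3$, as in $K_4$, where the optimum $y\equiv\frac13$ of weight $2=2\nu$ must also place weight off the packing. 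These tight $K_4$ configurations show the factor $2$ in (i) is best possible, so it must be extracted by an amortized LP argument across the whole packing, which I expect to be the main obstacle.
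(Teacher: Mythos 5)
Your proposal does not contain a proof of either part; both of the places you flag as ``the hard part'' are exactly where the content of Krivelevich's argument lies, and neither is filled in. For part (ii), your heavy-edge induction is sound as far as it goes, and your $K_5$ analysis correctly shows that it must stall; but the global step you then gesture at (``a maximal packing in the all-light graph, charging two of its edges per packed triangle, and recursing'') is not the right mechanism and fails as stated: a light triangle meeting your maximal packing in exactly one edge may meet it precisely in the edge you decline to charge, so two edges per packed triangle need not form a transversal, and no rule for choosing which two is offered. The argument that does work --- and the one this paper uses for its strengthening $\tau \le 2\nu^* - \tfrac14\sqrt{\nu^*}$ in Section~\ref{sec:nuvstaustar}, which subsumes part (ii) (the unweighted statement itself is only cited to \cite{Kri}) --- has two ingredients you never invoke. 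First, complementary slackness against an optimal fractional \emph{packing} $f$: every edge $e$ with $g(e)>0$ is saturated, $\sum_{t \ni e} f(t) = 1$. This is indispensable for the accounting, because light positive edges can each carry arbitrarily small $g$-weight, so their number is not controlled by $g(E)=\tau^*$; no bound of the form ``heavy edges plus half the light edges $\le 2\nu^*$'' can be extracted from the transversal side alone. Second, a count of tight triangles according to how many of their edges are light-positive versus of weight $\ge \tfrac12$, which yields $\nu^* \ge \tfrac{a}{4} + \tfrac{b+c}{2}$ ($a$ the number of light positive edges, $b+c$ the number of heavy ones); the transversal is then all heavy edges together with the complement of a maximum cut (a bipartite subgraph containing at least half the edges) of the graph formed by the light positive edges, of size at most $b+c+\tfrac{a}{2} \le 2\nu^*$. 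It is this max-cut complement, not a triangle packing inside the light graph, that covers the all-light triangles at cost one half.

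For part (i) you offer only a plan: swap a packed triangle for two edge-disjoint ones, then run ``an amortized LP argument across the whole packing,'' which you yourself identify as the main obstacle --- but that obstacle \emph{is} the theorem. (Your $K_4$ observation is correct, and the paper makes the same remark, but tightness of the bound says nothing about how to prove it.) Note also that the paper itself never reproves this direction: it is quoted from \cite{Kri}, and even its weighted extension, part \ref{subthm:wKriv:2} of Theorem~\ref{thm:wKriv}, is explicitly deferred to ``Krivelevich's original proof applied to multigraphs.'' So a complete write-up would have to reproduce Krivelevich's actual induction/swapping argument, which your sketch does not contain.
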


\begin{theorem}[Tuza \cite{Tuza2}]
\label{thm:tuza planar}
Conjecture~\ref{conj:tuza} holds whenever $G$ is planar.
\end{theorem}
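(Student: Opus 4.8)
The plan is to prove the statement by induction via a minimal counterexample: let $G$ be planar with $\tau(G)>2\nu(G)$, chosen with $|V(G)|$ and then $|E(G)|$ minimum, and then derive a contradiction by combining cheap local reductions with the sparsity of planar graphs supplied by Euler's formula. First I would clean up $G$: if some edge $e$ lies in no triangle then $\cT(G-e)=\cT(G)$, so $\nu$ and $\tau$ are unchanged and $G-e$ is a smaller counterexample; hence I may assume every edge of $G$ lies in a triangle, and in particular $\delta(G)\ge 2$. Similar deletions let me assume $G$ is connected (and, after the degree analysis below, well-behaved at its low-degree vertices).

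The key local remark is that the packing and covering problems linearize at a single vertex. For a vertex $v$, the triangles through $v$ correspond exactly to the edges of $H_v:=G[N(v)]$, the graph induced on the neighbourhood, via $vab\leftrightarrow ab\in E(H_v)$. Under this correspondence a set of edge-disjoint triangles through $v$ is a matching of $H_v$, so the maximum number of them is the matching number $\mu(H_v)$; and a set of edges covering all triangles through $v$ may be taken to consist only of edges at $v$, since replacing a chosen $ab$ by $va$ covers at least as many triangles through $v$ at no extra cost, so the minimum such cover is the vertex-cover number of $H_v$. Since the endpoints of a maximum matching always form a vertex cover, covering all triangles through $v$ costs at most $2\mu(H_v)$ — exactly the factor $2$ we want, realized locally at every vertex. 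Because $G$ is planar it has a vertex $v$ with $\deg(v)\le 5$, so $H_v$ has at most five vertices and $\mu(H_v)\le 2$.

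The remaining, and main, difficulty is to globalize this. Deleting $v$ removes precisely the triangles through $v$, so any transversal of $G-v$ extends to one of $G$ by adding the vertex cover of $H_v$ above, giving $\tau(G)\le \tau(G-v)+2\mu(H_v)$; combined with the inductive bound $\tau(G-v)\le 2\nu(G-v)$ on the smaller planar graph $G-v$, a contradiction follows as soon as
\[
\tau(G)\le 2\nu(G-v)+2\bigl(\nu(G)-\nu(G-v)\bigr)=2\nu(G),
\]
which reduces to the single inequality $\mu(H_v)\le \nu(G)-\nu(G-v)$. This says that some maximum packing of $G$ can be chosen to use a maximum matching's worth of triangles at $v$, and it need \emph{not} hold for an arbitrary low-degree $v$, since packing the rest of $G$ optimally may conflict with packing maximally at $v$. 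The crux is therefore to use Euler's formula to locate a reducible configuration — for instance a vertex of small degree whose incident triangles form a ``book'' on a shared edge, where planarity severely restricts how the pages can be arranged around $v$ — for which a global maximum packing can be rerouted to be locally optimal at $v$, so that $\mu(H_v)\le \nu(G)-\nu(G-v)$ is guaranteed. I expect this coordination between the local factor-$2$ bound and a global maximum packing, rather than the local computation or the Euler count itself, to be the heart of the proof.
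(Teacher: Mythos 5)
Your proposal sets up a sensible framework (minimal counterexample; the local correspondence between triangles through $v$ and edges of $H_v=G[N(v)]$; the bound $\tau(G)\le\tau(G-v)+2\mu(H_v)$), but it stops exactly where the proof actually has to be done. The induction closes only if some vertex $v$ satisfies $\nu(G)\ge\nu(G-v)+\mu(H_v)$; you correctly observe that this can fail for an arbitrary low-degree vertex, and you then merely express the expectation that Euler's formula will produce a configuration where it holds. No argument is given for that step, and it is not a routine one: a vertex of degree at most $5$ with $\mu(H_v)\le 2$ admits no obvious rerouting argument, since a maximum packing of $G-v$ may occupy all the link edges at $v$, and nothing in your sketch prevents this. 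As written, the proposal is an unfinished reduction scheme rather than a proof --- the ``heart of the proof,'' in your own words, is missing.

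For comparison, the paper resolves precisely this point, and does so by moving to the weighted/multigraph setting: it proves the more general Theorem~\ref{thm:surface}, of which the planar case is a specialization. In a counterexample minimizing $|E|+w(E)$ one shows every edge lies in at least two triangles and that edges in exactly two triangles have weight $1$; the reducible configuration is then a vertex $v$ whose neighborhood induces a cycle $u_1u_2\cdots u_k$, whose existence is forced by the embedding (if every triangle is facial, the link of every vertex is a cycle; otherwise one works inside an innermost non-facial separating triangle). Crucially, the reduction at $v$ is not ``delete $v$ and induct'': one deletes $v$ \emph{and lowers the weight} of the alternating link edges $u_{2i-1}u_{2i}$ by one. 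This weight reservation is what guarantees that the $\lfloor k/2\rfloor$ triangles $vu_{2i-1}u_{2i}$ can be added on top of \emph{any} packing of the reduced weighted graph --- i.e., it forces exactly the inequality $\nu_w(G)\ge\nu_{w'}(G')+\lfloor k/2\rfloor$ that your sketch cannot obtain --- while any transversal of the reduced graph extends to one of $G$ at a cost of at most $2\lfloor k/2\rfloor$ in weight. This device is unavailable if you insist on staying in the unweighted setting (deleting the reserved edges outright would destroy triangles not through $v$ that still must be covered), which is one reason Tuza's original unweighted argument is more delicate and why your deletion-only induction cannot be completed in the form you propose.
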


\begin{theorem}[Haxell \cite{Hax}]
\label{thm:haxell}
For every graph $G$, we have $~2.87\, \nu(G) \ge \tau(G)$.
\end{theorem}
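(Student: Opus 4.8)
The plan is to start from a maximum packing $P = \{T_1,\dots,T_\nu\}$ and build a transversal whose size, amortised over the triangles of $P$, is at most $3-\tfrac{3}{23}$ edges per triangle (note $3-\tfrac{3}{23} < 2.87$). By maximality, every triangle of $G$ shares an edge with some $T_i$, so the $3\nu$ edges of $E_P := \bigcup_i E(T_i)$ already form a transversal; this only recovers the trivial bound $\tau \le 3\nu$, so the entire task is to shave the constant $3$ down to $3-\tfrac{3}{23}$ by using, on average, strictly fewer than three edges per packing triangle, while allowing transversal edges outside $E_P$.

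First I would set up a local classification. Call an edge $e$ of a packing triangle $T$ \emph{essential} if $G$ contains a triangle other than $T$ whose only edge in $E_P$ is $e$; such a triangle can be covered only by $e$ itself or by one of its two edges outside $E_P$. A packing triangle with at most one essential edge is cheap, since two of its edges then cover every triangle through it, saving one edge. The obstruction is a \emph{critical} triangle $T=abc$ with all three edges essential, and the crux of the argument is to control these.

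The key step is an augmentation lemma. For a critical $T=abc$, fix a private triangle through each edge, say $abx$, $bcy$, $caz$, each meeting $E_P$ only in its named edge (so its other two edges lie outside $E_P$). If two of these, say $abx$ and $bcy$, are edge-disjoint, then deleting $T$ from $P$ and inserting both yields a packing of size $\nu+1$ — their off-$E_P$ edges avoid the rest of $P$ — contradicting maximality. A short check shows $abx$ and $bcy$ are edge-disjoint unless $x=y$, and cyclically this forces $x=y=z=:w$, so that $\{a,b,c,w\}$ spans a $K_4$ with $aw,bw,cw\notin E_P$. Hence every critical triangle sits inside a $K_4$, and I would then exploit this: the four triangles of such a $K_4$ are covered by two edges (for instance $ab$ and $cw$, with $cw\notin E_P$), so a critical triangle together with its $K_4$ can be handled with two edges rather than three — again a saving, \emph{provided} the chosen off-$E_P$ edge is not needed elsewhere.

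Finally I would run a discharging argument: assign each packing triangle the number of transversal edges attributed to it, and redistribute charge from the triangles that genuinely require all three edges to the cheap and $K_4$-savable ones, using the structural constraints above; tuning the charges should yield the average bound $3-\tfrac{3}{23}$. The main obstacle is precisely the case analysis when the private triangles and $K_4$'s of distinct critical triangles interlock — when an off-$E_P$ edge such as $cw$ that we wish to use is itself the unique cover of some further triangle, or when two $K_4$'s share edges. Ruling out augmentations while still extracting a uniform saving across these overlapping configurations is the technical heart of the proof, and it is exactly this interaction pattern that pins the constant at $\tfrac{3}{23}$ rather than something larger.
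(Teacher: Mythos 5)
Your structural opening is fine as far as it goes: maximality of the packing makes $E_P$ a transversal, and the exchange argument for a critical triangle $abc$ (edge-disjoint private triangles would augment the packing, so $x=y=z=w$ and $\{a,b,c,w\}$ spans a $K_4$ with $aw,bw,cw\notin E_P$) is correct; it is essentially the same exchange that drives the proof of Lemma~\ref{lem:3-2/3gamma} in the paper's weighted setting, where two edges of $E'(U)\setminus C_1$ would yield two edge-disjoint triangles contradicting maximality. But the bookkeeping that follows is flawed before the discharging even starts. Essentialness is defined through triangles whose \emph{only} $E_P$-edge is $e$, so even the union of \emph{all} essential edges over all packing triangles need not be a transversal: a triangle $S$ can meet $E_P$ in two or three edges, each non-essential and each lying in a different packing triangle; such an $S$ witnesses essentialness of nothing and is covered by nothing. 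For the same reason, the claim that a triangle with at most one essential edge is ``cheap, since two of its edges then cover every triangle through it'' is false: the triangles through $T$ missed by the two retained edges are exactly those with a second $E_P$-edge elsewhere, and whether they get covered depends on the choices made at the other packing triangles. The savings are coupled, not additive. This is not a corner case: triangles of type $(\cB,2)$ and $(\cB,3)$ (in the paper's notation) absorb most of the effort in Haxell's proof and in Section~\ref{sec:tau vs nu} --- for instance the type-$(\cB,3)$ triangles are handled there by a max-cut argument (Lemma~\ref{lem:3/2+5/2gamma+2beta}), an ingredient entirely absent from your sketch.

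The second, larger gap is that the constant is never derived. You assert that ``tuning the charges should yield'' $3-\tfrac{3}{23}$ and yourself label the interlocking configurations ``the technical heart of the proof''; that heart is missing. In the actual arguments (Haxell's, and the paper's weighted analogue, Theorem~\ref{thm:tauvsnu}), what stands in place of your discharging is a hierarchy of auxiliary maximum independent families ($\cB_1$, $\cB_2$, $\cB'$, $\cB_1'$, $\cI$, $\cK$) with parameters $\gamma,\beta,\alpha,\delta,\eta,\delta_0$, five genuinely different transversal constructions each giving an upper bound on $\tau$ linear in these parameters, and a final convex combination of those inequalities whose coefficients are what pin down the constant. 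Nothing in your proposal produces even a single unconditional inequality better than $\tau\le 3\nu$, so no constant below $3$ --- let alone $3-\tfrac{3}{23}$ --- follows from what is written. As it stands, you have correctly reconstructed the first structural lemma and accurately described the difficulty, but you have not given a proof.
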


There is a natural weighted analogue of Tuza's triangle packing problem. Namely, if $w \colon E \to {\mathbb Z}_+$ is an \DEF{edge-weighting}, then  using the edge-triangle matrix $A$ introduced above, we define
\begin{align*}
\nu_w(G) &= \max \{ \langle\one,x\rangle : \mbox{$Ax \le w$ and $x \in {\mathbb Z}^T_+$} \}, \\
\tau_w(G) &= \min\{ \langle w,y\rangle : \mbox{$A^{\top}y \ge \one$ and $y \in {\mathbb Z}_+^E$} \}.
\end{align*}
In other words, $\nu_w(G)$ is the largest number of (not necessarily distinct) triangles such that each edge $e$ is contained in at most $w(e)$
of them, and we say that such a collection of triangles is a (weighted) \DEF{packing}. Similarly, $\tau_w(G)$ is the minimum weight of a transversal, where the \DEF{weight} of an edge-set $R$ is defined as the sum of the weights of its elements, $w(R) = \sum_{e\in R} w(e)$.

As before, relaxing the integrality constraints gives us dual linear programs:
\begin{align}
\nu_w^*(G) &= \max \{ \langle\one,x\rangle : \mbox{$Ax \le w$ and $x \in {\mathbb R}^T_+$} \}
\label{eq:s3}\\
\tau_w^*(G) &= \min\{ \langle w,y\rangle : \mbox{$A^{\top}y \ge \one$ and $y \in {\mathbb R}_+^E$} \}
\label{eq:s4}
\end{align}
and we have the chain of inequalities:
\[ \tau_w(G) \ge \tau_w^*(G) = \nu_w^*(G) \ge \nu_w(G). \]

Admissible solutions $x$ and $y$ to the linear programs \eqref{eq:s3} and \eqref{eq:s4} are called \DEF{fractional packings} and \DEF{fractional transversals}, respectively.

Given a weighting $w$ of a graph $G$, we can define a multigraph $G'$ by replacing each edge $e$ in $G$ with $w(e)$ parallel edges. We consider a triangle in $G'$ to be a $K_3$-subgraph of $G'$ (i.e. no multiple edges), and define the packing and transversal numbers $\nu$ and $\tau$ for $G'$ accordingly. Any weighted packing in $G$ corresponds naturally to a packing of same size in $G'$ and vice versa, implying that $\nu_w(G)=\nu(G')$. Also any transversal with weight $k$ in $G$ corresponds naturally to a transversal of size $k$ in $G'$, but the other direction is not generally true. However, if $C$ is any optimal transversal in $G'$ and $e\in C$ then $C$ also contains all edges that are parallel to $e$. Hence $C$ corresponds naturally to a transversal of weight $|C|$ in $G$. Consequently, we have $\tau_w(G)=\tau(G')$. Similarly, the fractional packing and covering parameters for $(G,w)$ and $G'$ are the same. Thus it is admissible to investigate packings and transversals in multigraphs instead of the weighted problems in simple graphs. We will do so in Sections \ref{sec:nuvstaustar} and \ref{sec:tau vs nu}.

The subject of this paper is the following weighted version of Tuza's conjecture.

\begin{conjecture}
For every graph $G= (V,E)$ and $w\colon E \to {\mathbb Z}_+$, we have $$2 \nu_w(G) \ge \tau_w(G).$$
\end{conjecture}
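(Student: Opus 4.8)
The plan is to use the multigraph formulation established above: since $\nu_w(G)=\nu(G')$ and $\tau_w(G)=\tau(G')$, it suffices to prove $\tau(G')\le 2\nu(G')$ for every multigraph $G'$, where a triangle is a $K_3$-subgraph of $G'$ as in the reduction above. I would attack this by turning a maximum packing into a cheap transversal, aiming to spend at most two edges per packed triangle. Fix a maximum packing $\mathcal{P}$ of edge-disjoint triangles, so $|\mathcal{P}|=\nu(G')$, and let $F$ be the set of the $3\nu(G')$ edges they use. By maximality of $\mathcal{P}$, every triangle of $G'$ meets $F$ (otherwise it could be added), so $F$ is already a transversal, of size $3\nu(G')$. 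The entire problem then reduces to discarding a set $D\subseteq F$ with $|D|\ge\nu(G')$ — morally one edge from each packed triangle — so that $F\setminus D$ remains a transversal.

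Concretely I would phrase the discard step as a local selection problem: choose for each $t\in\mathcal{P}$ one of its three edges to drop, subject to the constraint that no triangle of $G'$ has all of the edges it shares with $F$ dropped. To control this selection I would lean on the weighted analogue of Krivelevich's inequality, $\tau(G')\le 2\nu^*(G')$ (Theorem~\ref{thm:Kriv}(ii)), as a fractional backbone: starting from an optimal fractional packing attaining $\nu^*(G')$, one attempts an augmentation or rounding argument that converts it into an integral packing while charging each unit of rounding loss to an edge that can be safely removed from $F$. Since the result is immediate when $\nu^*(G')=\nu(G')$, the real content is to show that the integrality gap $\nu^*(G')-\nu(G')$ never forces the transversal above $2\nu(G')$.

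An alternative route is induction on $|\mathcal{T}(G')|$: pick a triangle $t$, place two of its edges into the transversal, delete them, and recurse on the smaller multigraph; this yields ratio exactly $2$ provided one can always choose $t$ and the two edges so that the deletion drops $\nu$ by exactly one. Guaranteeing such a choice is, I expect, the main obstacle. In the configurations where the fractional and integral packing numbers diverge — the same half-integral and densely overlapping obstructions (fractional triangles, $K_4$ and its blow-ups) that make Tuza's original conjecture hard — every naive pair of edges either fails to decrease $\nu$ or destroys the transversal, so the charging leaks past the factor $2$. This gap between $\nu^*(G')$ and $\nu(G')$ is exactly what keeps the conjecture open, and it is why the paper does not prove it outright but instead establishes the sharp approximate bound $\tau_w(G)\le 2\nu_w^*(G)-\tfrac14\sqrt{\nu_w^*(G)}$ and shows that it is essentially best possible.
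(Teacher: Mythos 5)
The statement you were asked to prove is a \emph{conjecture} --- the weighted analogue of Tuza's conjecture --- which the paper poses but does not prove; it contains Tuza's original (open) conjecture as the special case $w\equiv 1$, so no complete proof exists in the paper to compare against. Your proposal is, as you yourself conclude, not a proof: all three routes are left at the level of intention, and your closing diagnosis --- that the integrality gap between $\nu^*$ and $\nu$ is precisely the obstruction, and that the paper instead proves the partial results $\tau_w \le 2\nu_w^* - \tfrac14\sqrt{\nu_w^*}$, $\tau_w \le 2.92\,\nu_w$, and the planar/surface case --- is accurate. So the honest verdict is: genuine gap, correctly self-reported.

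One concrete point worth flagging, because it shows your first route is not merely incomplete but dead on arrival: the plan to find a transversal \emph{inside} the edge set $F$ of a maximum packing $\mathcal{P}$ by discarding at least $\nu$ edges fails already on $K_4$ (the paper notes this is exactly the tight example for $2\nu \ge \tau^*$). Take vertices $a,b,c,d$ and the packing $\mathcal{P}=\{abc\}$, so $F=\{ab,ac,bc\}$ and $\nu=1$. Each of the other three triangles $abd$, $acd$, $bcd$ meets $F$ in exactly one edge, and these edges are distinct, so no edge of $F$ can be discarded: the only transversal contained in $F$ is $F$ itself, of size $3\nu$. Meanwhile $\tau(K_4)=2\nu=2$ is achieved only by transversals such as $\{ab,cd\}$ that use an edge \emph{outside} $F$. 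Hence any proof of the conjecture must construct transversals not confined to the packing edges --- this is exactly why the paper's partial results (following Tuza, Krivelevich, and Haxell) work with auxiliary structures such as secondary packings, fractional solutions, and large edge-cuts rather than pruning a maximum packing. Your second and third routes (rounding a fractional packing; induction deleting two edges of a chosen triangle) both hide the entire difficulty in a step you do not supply --- controlling the rounding loss, or guaranteeing that $\nu$ drops when two edges are deleted --- and these are precisely the points where all known attacks on Tuza's conjecture stall.
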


First we generalize Krivelevich's Theorem \ref{thm:Kriv} to the weighted case. For Krivelevich's result, the inequality between $\nu$ and $\tau^*$ is tight for $K_4$ and we show that the same bound holds in the weighted case.  On the other hand, the inequality between $\tau$ and $\nu^*$ is not tight and we show that an improvement can be made.

\begin{theorem}\label{thm:wKriv}
For every graph $G=(V,E)$ and $w\colon E \rightarrow {\mathbb Z}_+$ we have
\begin{enumerate}[label=\textup{(\roman*)}, widest=iii, leftmargin=*]
\item\label{subthm:wKriv:1}
  $\tau_w(G) \le 2 \tau_w^*(G) - \sqrt{\tau_w^*(G)/6}+1$, and
\item\label{subthm:wKriv:2}
  $2 \nu_w(G) \ge \tau_w^*(G)$.
\end{enumerate}
\end{theorem}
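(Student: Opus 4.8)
The plan is to argue throughout in the weighted simple graph $(G,w)$, treating a triangle as a vertex-triple inducing a $K_3$ and treating $w(e)$ as a capacity on $e$, rather than passing to the multigraph $G'$. This keeps the one structural fact on which the whole theory rests, namely that two distinct triangles meet in at most one edge; this is precisely what is lost in $G'$, where two $K_3$'s on the same vertex-triple may share two parallel edges. In this setting both inequalities are the capacitated forms of Krivelevich's Theorem~\ref{thm:Kriv}, and I would first check that his two proofs survive when unit capacities are replaced by $w(e)$: they use only the ``two triangles, one common edge'' property and the duality $\tau_w^*=\nu_w^*$, neither of which feels the capacities. Rerunning the proof of Theorem~\ref{thm:Kriv}(i) with $w$ as capacities then gives part~\ref{subthm:wKriv:2}, while rerunning the proof of Theorem~\ref{thm:Kriv}(ii) gives the leading term $\tau_w\le 2\tau_w^*$ of part~\ref{subthm:wKriv:1}.

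Everything then reduces to producing the additive saving $-\tfrac14\sqrt{\tau_w^*}$ in part~\ref{subthm:wKriv:1}. I would take an optimal fractional transversal $y^*$ with $\langle w,y^*\rangle=\tau_w^*$, place the heavy edges $H=\{e:y^*(e)\ge\tfrac12\}$ into the transversal at full weight, and cover the remaining light triangles (all of whose edges carry $y^*<\tfrac12$) separately. The crude estimate $w(H)\le 2\sum_{e\in H}w(e)y^*(e)\le 2\tau_w^*$ is tight only in the half-integral, fully saturated situation where $y^*$ is $\{0,\tfrac12\}$-valued and every triangle has at least two heavy edges; so the strategy is to show that precisely in this near-extremal regime $H$ over-covers and can be thinned.

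The saving then comes from deleting edges from $H$ while keeping a transversal. Since every relevant triangle carries at least two $H$-edges, I may delete any $D\subseteq H$ no two of whose edges lie in a common triangle, and $H\setminus D$ is still a transversal. To force $w(D)\ge\tfrac14\sqrt{\tau_w^*}$ I would split on the largest number $d$ of triangles through a single heavy edge. If $d$ is large, one edge $uv$ lies in many triangles $uvw$; then $uv$ alone covers all of them, so their other heavy edges $uw,vw$ become removable, giving a saving that grows with $d$. If $d$ is small, the auxiliary graph on $H$ in which two heavy edges are adjacent whenever they share a triangle has maximum degree $O(d)$, so a weighted-greedy independent set yields a deletable $D$ with $w(D)$ of order $\tau_w^*/d$. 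Balancing the two regimes at $d\approx\sqrt{\tau_w^*}$ produces a saving of order $\sqrt{\tau_w^*}$, and optimizing the constants is what pins the coefficient at $\tfrac14$.

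The hard part will be executing this for a general (non half-integral) optimum $y^*$ rather than the clean extremal case: one must simultaneously cover the genuine light triangles and bound the rounding loss incurred there, track the saving in weighted terms (the independent-set and deletion counts must be measured by $w$, not by cardinality), and handle the awkward sub-case of the high-degree argument in which the apexes $w$ are mutually adjacent, so that the edges $uw,vw$ lie in still further triangles and are not freely removable. Making this bookkeeping tight enough to reach the constant $\tfrac14$, rather than some smaller constant, is the delicate endpoint, and is presumably what must be calibrated against the matching lower-bound construction promised in the abstract.
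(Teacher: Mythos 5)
There is a genuine gap, and it sits exactly where your proposal defers the work. Your only source of saving is thinning the heavy set $H=\{e: y^*(e)\ge \tfrac12\}$, but nothing forces an optimal fractional transversal to put any weight on such edges. Take the disjoint union of $n$ triangles with the optimum $y^*\equiv\tfrac13$: then $H=\emptyset$, $\tau_w^*=n$, and both branches of your dichotomy return a saving of $0$, while the theorem demands a saving of $\tfrac14\sqrt{n}$. More generally, whenever $w(H)=o(\tau_w^*)$ the low-degree branch only yields a deletable set of weight about $w(H)/(d+1)$ and the high-degree branch has nothing substantial to thin, so balancing at $d\approx\sqrt{\tau_w^*}$ cannot produce $\Omega\bigl(\sqrt{\tau_w^*}\bigr)$. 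In these regimes the saving must come from the light side, and the paper extracts it with an ingredient your plan lacks entirely: an improved max-cut bound (Lemmas~\ref{lem:evcut} and~\ref{lem:bigcut}: every multigraph with $e$ edges has an edge-cut of size at least $e/2+\sqrt{e}/4$). In the paper's notation ($Z$, $A$, $B$, $C$ for edges with $g=0$, $0<g<\tfrac12$, $g=\tfrac12$, $g>\tfrac12$), the transversal produced is $(B\setminus I)\cup C\cup R$, where $R$ is the complement of a large cut of $G[A\cup I]$; the $\sqrt{a}/4$ saved by that cut is indispensable, not an optimization.

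Second, the coupling between heavy deletion and light covering is not bookkeeping: your deletion rule is sound only for triangles with at least two heavy edges, and a triangle whose unique heavy edge is deleted must be caught by the light cover within the light budget. The paper resolves this by construction: the deleted set $I$ is chosen inside $B$ only, as an independent set of the auxiliary graph whose adjacency is ``lie in a common \emph{tight} triangle''; complementary slackness forces the third edge of such a triangle into $Z$, and since no triangle lies entirely in $Z$ this auxiliary graph is triangle-free, so Lemma~\ref{lem:bigindep} gives $|I|\ge\tfrac12\sqrt{b}$ with no degree hypothesis at all --- and then $I$ is placed into the cut graph $G[A\cup I]$, so precisely the triangles your scheme cannot handle are the ones covered by $R$. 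Closing the budget also requires the complementary-slackness count $\nu^*\ge a/4+(b+c)/2$ and an exchange argument ($\varepsilon$ onto $A$, off of $C$) showing $a\ge c$ at an optimum; neither appears in your plan. Finally, your high-degree branch is not merely awkward in the adjacent-apex sub-case --- it is false as stated there, and I do not see a repair inside the dichotomy framework. (Part~\ref{subthm:wKriv:2} is not at issue: like the paper, you reduce it to Krivelevich's argument, and the multigraph-versus-capacity distinction you worry about is immaterial given the equivalence set out in the introduction.)
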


Although \ref{subthm:wKriv:1} may appear to be a rather small improvement on Krivelevich's original result, we show that this improvement is best possible up to a logarithmic factor (even for the unweighted case). See Section \ref{sec:nuvstaustar}. This answers a question of Krivelevich about the tightness of his bounds.

We also prove weighted analogues of Tuza's and Haxell's theorems in Sections \ref{sec:surfaces} and \ref{sec:tau vs nu}, respectively.
We shall extend Tuza's Theorem \ref{thm:tuza planar} to weighted graphs embedded in an arbitrary surface. We refer to \cite{MT} for standard terminology concerning graphs on surfaces. A cycle $C$ of a graph embedded in a surface is said to be \DEF{surface-separating} if cutting the surface along $C$ disconnects the surface. Note that every \DEF{facial cycle} (i.e. a cycle bounding a face) is surface-separating.

\begin{theorem}
\label{thm:surface}
Suppose that a graph $G$ is embedded in a surface such that every triangle is surface-separating.
Then for every $w\colon E \to {\mathbb Z}_+$, we have $$2\, \nu_w(G) \ge \tau_w(G).$$
\end{theorem}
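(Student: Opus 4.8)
The plan is to prove the equivalent statement $\tau_w(G) \le 2\nu_w(G)$ by induction on the total weight $W = \sum_{e\in E} w(e)$, keeping the embedding fixed throughout. The base case $W=0$ (equivalently $\cT(G)=\emptyset$) is trivial, since then $\nu_w(G)=\tau_w(G)=0$. For the inductive step I would first carry out the standard reduction of deleting every edge that lies in no triangle: this changes neither $\nu_w(G)$ nor $\tau_w(G)$, it only shrinks $\cT(G)$, and it clearly preserves the hypothesis that every remaining triangle is surface-separating. Hence we may assume every edge of $G$ lies in a triangle.

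The heart of the argument is to locate a suitable ``innermost'' triangle, and this is exactly where the surface-separating hypothesis is used. I would invoke the standard fact that a cycle in a surface is surface-separating if and only if it is null-homologous over $\mathbb{Z}/2$; thus each triangle $T$, being a boundary, cuts the surface into two complementary unions of faces, each having $T$ as boundary. Ordering all such bounded regions by inclusion, I would choose a triangle $T_0 = xyz$ together with an inclusion-minimal bounding region $R_0$. Minimality forces $R_0$ to be a disk whose interior meets no triangle of $G$: a triangle lying inside $R_0$ would, by the Jordan curve theorem applied within the disk, bound a strictly smaller region, contradicting the choice of $R_0$. This is precisely the point at which the separating hypothesis supplies the well-defined ``inside'' of a triangle, and hence the nested structure that the planar proof gets for free from the Jordan curve theorem.

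With the triangle-free innermost triangle $T_0=xyz$ in hand, set $\delta = \min\{w(xy),w(yz),w(zx)\}$. I would form a new weighting $w'$ by lowering the weights of two suitably chosen edges of $T_0$ by $\delta$ and leaving all other weights (in particular that of the third edge of $T_0$) unchanged, so that $W$ strictly decreases and the inductive hypothesis applies to $(G,w')$. Two bookkeeping inequalities then close the induction. First, since the transversal constraint $A^{\top}y \ge \one$ is independent of the weighting, every optimal (integral, hence $\{0,1\}$-valued) transversal for $(G,w')$ is also a transversal for $(G,w)$, and its $w$-weight exceeds its $w'$-weight by at most $2\delta$; this gives $\tau_w(G) \le \tau_{w'}(G) + 2\delta$. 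Second, adding $\delta$ copies of $T_0$ to an optimal packing for $(G,w')$ should yield a packing for $(G,w)$, giving $\nu_w(G) \ge \nu_{w'}(G) + \delta$. Combining these with $\tau_{w'}(G)\le 2\nu_{w'}(G)$ yields $\tau_w(G) \le 2\nu_{w'}(G)+2\delta \le 2\nu_w(G)$.

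The main obstacle is the second inequality. Adding $\delta$ copies of $T_0$ is legitimate only if some optimal packing for $(G,w')$ leaves at least $\delta$ units of slack on the single edge of $T_0$ whose weight was not reduced; otherwise those $\delta$ new copies over-saturate that edge. This is where I expect the emptiness of $R_0$ to do the real work: because no triangle lies inside $R_0$, every triangle other than $T_0$ that meets an edge of $T_0$ lies on its outer side, and I would exploit this one-sidedness — together with the freedom in choosing which two edges of $T_0$ to lower — to rearrange a maximum packing so that the retained edge is not saturated. Making this slack argument precise (choosing the two edges and verifying the packing can avoid over-using the third) is the crux, and it is the step that genuinely needs the separating hypothesis rather than mere bookkeeping.
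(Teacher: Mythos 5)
Your reduction has a genuine gap at exactly the step you flag as the crux, and unfortunately the claim you need there is simply false, so the approach cannot be completed as outlined. Take $G=K_4$ on vertices $x,y,z,u$ with all weights $1$, embedded in the sphere: every triangle is facial, hence surface-separating, and $\nu_w(G)=1$, $\tau_w(G)=2$. Any innermost triangle is a face, say $T_0=xyz$, and $\delta=1$. Whichever two edges of $T_0$ you lower --- say $xy$ and $yz$ --- the only triangle of $G$ all of whose edges retain positive weight is $xzu$, so the \emph{unique} optimal packing for $(G,w')$ is $\{xzu\}$, and it saturates the third edge $zx$ of $T_0$. No rearrangement can create slack on the retained edge; indeed $\nu_w(G)=1<2=\nu_{w'}(G)+\delta$, and your chain only yields $\tau_w(G)\le 2\nu_{w'}(G)+2\delta=4$ rather than the required $2\nu_w(G)=2$. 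The emptiness of $R_0$ cannot rescue this: one-sidedness says nothing about how a maximum packing uses the edges of $T_0$ from the outside. (A secondary, repairable issue: $R_0$ need not be a disk, since a separating triangle can cut off a subsurface of positive genus, so the Jordan curve theorem is not available inside $R_0$; that an inclusion-minimal region contains no further triangle must instead be deduced from the hypothesis that \emph{every} triangle separates.)

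The paper's proof also uses a minimality/innermost-region idea, but the weight reductions are quite different, and the difference is precisely what your bookkeeping is missing. First, the paper only performs your style of reduction when an edge $e$ lies in \emph{exactly one} triangle $\{e,f_1,f_2\}$, and then it lowers \emph{all three} edges by $1$: the packing trivially gains one copy of that triangle, while an inclusion-minimal optimal transversal for $w'$ cannot contain all of $e,f_1,f_2$ (if it contained $f_1$, then $e$ would be redundant, as $e$ lies in no other triangle), so its $w$-cost exceeds its $w'$-cost by at most $2$. This hinges on $e$ lying in a single triangle --- a property the edges of an innermost triangle need not have, which is exactly why the $K_4$ example defeats your version. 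Second, the real workhorse is a different reduction: a minimal counterexample contains no vertex $v$ whose neighborhood induces a cycle (a ``wheel''), proved by deleting $v$ and lowering the weights of alternating rim edges. The topology enters only at the end: inside an innermost non-facial separating triangle (or anywhere, if all triangles are facial), every edge lies in exactly two facial triangles, so any vertex properly inside is the hub of such a wheel, a contradiction. In short, the separating hypothesis is used to find a \emph{wheel} to reduce, not an ``empty'' triangle to peel off; to fix your argument you would need to replace the $\delta$-peeling step by reductions of this kind.
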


Since every cycle in a graph embedded in the plane is surface-separating, Theorem \ref{thm:surface} yields the following extension of Tuza's Theorem \ref{thm:tuza planar}.

\begin{corollary}
\label{cor:wtuza}
For every planar graph $G = (V,E)$ and every $w\colon E \to {\mathbb Z}_+$, we have
$~2\, \nu_w(G) \ge \tau_w(G)$.
\end{corollary}

Interestingly, the introduction of weights seems to simplify the proof of Tuza's theorem, while it appears to make things more difficult for Haxell's theorem (in fact we get a slightly larger constant factor).

\begin{theorem}\label{thm:whaxell}
For every graph $G$ and $w\colon E \rightarrow {\mathbb Z}_+$, we have $$2.92\, \nu_w(G) \ge \tau_w(G).$$
\end{theorem}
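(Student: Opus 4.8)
The plan is to pass to the multigraph formulation and then run a minimal-counterexample induction in the spirit of Haxell's proof, tracking how parallel edges worsen the extremal local ratio. As explained just before the statement, $\nu_w(G)=\nu(G')$ and $\tau_w(G)=\tau(G')$, where $G'$ is the multigraph obtained by replacing each edge $e$ with $w(e)$ parallel copies and a triangle is a $K_3$ on three distinct vertices; so it suffices to prove $\tau(G')\le 2.92\,\nu(G')$ for every such multigraph $G'$. I would argue by induction on $|E(G')|$, taking $G'$ to be a counterexample with as few edges as possible. The usual cleaning reductions apply: we may assume every edge lies in a triangle (else delete it, changing neither $\nu$ nor $\tau$), and if $\mathcal{P}$ is a maximum packing then maximality forces every triangle of $G'$ to share an edge with $\bigcup_{T\in\mathcal{P}}E(T)$, so this union is already a transversal of size $3\nu(G')$.

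The inductive engine is a \emph{reducible configuration}: a set $X\subseteq E(G')$ together with an integer $k\ge 1$ such that $|X|\le 2.92\,k$ and $\nu(G'-X)\le\nu(G')-k$. Indeed, given such an $X$, any transversal $Y$ of $G'-X$ yields a transversal $X\cup Y$ of $G'$, so applying the induction hypothesis to $G'-X$ gives $\tau(G')\le|X|+\tau(G'-X)\le 2.92\,k+2.92\,(\nu(G')-k)=2.92\,\nu(G')$, contradicting the choice of $G'$. Thus everything reduces to showing that a graph with no reducible configuration cannot violate the bound. Concretely, for a packing triangle $T$ one studies the triangles that meet $T$ in a single edge and the way these ``link'' $T$ to other members of $\mathcal{P}$; from a cluster of $k$ mutually linked packing triangles one attempts to extract a cheap edge set $X$ that covers every triangle touching the cluster. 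If every cluster admitted such a cover with $|X|\le 2.92\,k$, the induction would close, so the real content is a finite optimization over the possible local link-configurations, with $2.92$ being the largest ratio $|X|/k$ that can occur.

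The main obstacle is precisely this local optimization once parallel edges are allowed. In the simple-graph case Haxell's enumeration of the local configurations produces the ratio $2.87$; the new difficulty is that an edge of $G'$ may carry large multiplicity, so a single parallel class can simultaneously serve as an edge of many triangles and can concentrate a large family of triangles, and covering such a class behaves differently from covering a single edge (an optimal transversal either takes all copies of a class or none). This invalidates the clean counting available for simple graphs: configurations that cannot arise without multiplicities now appear and must be covered, and the extremal such configuration is exactly what forces the weaker ratio $2.92$ in place of $2.87$. I expect the reduction to multigraphs and the inductive wrapper to be routine, and essentially all of the genuine work, and the only delicate part, to lie in verifying that every local link-configuration in a multigraph admits a cover $X$ with $|X|\le 2.92\,k$ — that is, in the case analysis that pins down the extremal constant.
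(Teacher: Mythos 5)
There is a genuine gap here, and it is not a small one: your entire argument is the inductive wrapper, while the actual mathematical content is deferred. The wrapper itself is fine --- if a ``reducible configuration'' $(X,k)$ with $|X|\le 2.92\,k$ and $\nu(G'-X)\le\nu(G')-k$ always existed, induction would close exactly as you say. But you give no argument that such configurations exist, no definition of what a ``cluster of mutually linked packing triangles'' is, no proof that deleting your candidate $X$ really decreases $\nu$ by $k$ (this requires the cluster to be separated from the rest of the packing, which is precisely what fails in general), and no case analysis producing the ratio $2.92$. Calling this ``a finite optimization over the possible local link-configurations'' is also unjustified: in a multigraph, edge multiplicities are unbounded, so the set of local configurations is not finite in any obvious sense, and you would need a compactness or normalization argument even to make the claim meaningful. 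Asserting that $2.92$ ``is the largest ratio that can occur'' is assuming the theorem, not proving it.

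The paper's proof is structured quite differently, and it is worth seeing why. It does pass to multigraphs as you do, but then it follows Haxell's \emph{global} method rather than a minimal-counterexample induction: fix a maximum packing $\cB$ with $|\cB|=\nu$, classify all triangles by how many edges they share with $E[\cB]$, and build a nested sequence of auxiliary packings $\cB_1$, $\cB_2$, $\cB'$, $\cB_1'$, $\cI$, $\cK$ whose sizes define global parameters $\gamma,\beta,\alpha,\delta,\eta,\delta_0$. Each of five lemmas constructs an explicit transversal from these families (for instance $E[\cB\sm\hat\cB_1]\cup\{e(T):T\in\cB_1\}\cup\bigcup_T E'(T)$ in Lemma~\ref{lem:3-2/3gamma}, or $E[\cB_1]\cup E[\cB_2]$ plus half the edges of a residual graph in Lemma~\ref{lem:3/2+5/2gamma+2beta}), giving five linear upper bounds on $\tau$ in these parameters. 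The constant then comes out of taking the convex combination with weights $\tfrac{1}{5},\tfrac{4}{75},\tfrac{8}{75},\tfrac{8}{25},\tfrac{8}{25}$, which makes all parameters cancel except for quantities controlled by Lemma~\ref{lem:alpha+eta}, yielding $\tau\le\tfrac{73}{25}\nu=2.92\,\nu$. Note that the delicate multigraph-specific work (e.g.\ the argument in Lemma~\ref{lem:last} handling parallel edges in $E'(U)$, which is where the constant degrades from Haxell's $2.87$) happens inside these transversal constructions, not in any local deletion argument. To repair your proposal you would essentially have to invent a different proof from scratch; as written, it establishes nothing beyond the trivial $\tau\le 3\nu$.
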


When considering fractional versions of integer programming problems, it is natural to wonder ``how fractional'' optimal solutions need to be. For instance, it is immediate that three perfect matchings pack in a bridgeless cubic graph $H$ if and only if $H$ is $3$-edge-colourable, and Edmonds' matching polytope theorem \cite{Ed} proves that there is always a fractional packing of value $3$. The Berge-Fulkerson conjecture asserts that there always exists a half-integral packing of value $3$.  Numerous other theorems and conjectures in combinatorial optimization concern this phenomena (see also \cite{Hu}, \cite{LS}, \cite{Se}). Returning to our problem of packing and covering triangles, for any positive integer $k$, let $\nu_k = \nu_w$ ($\tau_k = \tau_w$) where $w\colon E \to {\mathbb Z}_+$ is the constant function of value $k$. It is immediate from the rationality of the matrix $A$ that for every graph $G$ there exists an integer $k$ so that $\nu^*(G) = \frac{1}{k}\,\nu_k(G)$ and $\tau^*(G) = \frac{1}{k}\, \tau_k(G)$.  The question which arises is whether or not there exists a fixed integer $k$ so that
$\frac{1}{k}\nu_k(G) = \nu^*(G)$ for every graph $G$ (i.e. whether or not there is a fixed $k$ so that there always exists an optimal fractional packing assigning rationals with denominator $k$).  We resolve this question in the negative with the following theorem, proved in Section \ref{sec:integrality}.

\begin{theorem}
\label{thm:integrality}
There does not exist a fixed integer $k$ so that  $\nu^*(G) = \frac{1}{k}\nu_k(G)$ for every graph $G$, and similarly there is no fixed $k$ so that $\tau^*(G) = \frac{1}{k}\tau_k(G)$ for every graph $G$.
\end{theorem}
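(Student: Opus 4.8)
My plan is to treat the two assertions separately, since the transversal case turns out to be much easier than the packing case. For the $\tau$-statement I would first observe that a constant weight factors out of the integer transversal program: taking $w\equiv k$ in the definition of $\tau_w$ gives $\tau_k(G)=\min\{k\langle\one,y\rangle: A^\top y\ge\one,\ y\in\mathbb Z_+^E\}=k\,\tau(G)$, because the feasible region does not depend on $k$. Hence $\tau_k(G)/k=\tau(G)$ for every $k$, and the statement reduces to producing a single graph with $\tau(G)\neq\tau^*(G)=\nu^*(G)$, i.e.\ a transversal integrality gap. I would take $G=K_5$: the symmetric primal and dual solutions (weight $\tfrac13$ on every triangle, resp.\ on every edge) show $\nu^*(K_5)=\tau^*(K_5)=\tfrac{10}{3}$, whereas by Mantel's theorem the largest triangle-free subgraph of $K_5$ has $6$ edges, so $\tau(K_5)=10-6=4$. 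Thus $\tau_k(K_5)/k=4\neq\tfrac{10}{3}=\tau^*(K_5)$ for every $k$, which already rules out any fixed $k$.

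For the $\nu$-statement the quotient $\nu_k/k$ genuinely depends on $k$ (and tends to $\nu^*$ as $k\to\infty$), so a single graph cannot work and the argument must be sharper. The key reduction is a divisibility observation: since $\tfrac1k$ times any integral $k$-packing is a fractional packing we always have $\nu_k(G)\le k\,\nu^*(G)$, and because $\nu_k(G)\in\mathbb Z$,
\[
 k\,\nu^*(G)\notin\mathbb Z \ \Longrightarrow\ \nu_k(G)\le\lfloor k\,\nu^*(G)\rfloor< k\,\nu^*(G)\ \Longrightarrow\ \nu_k(G)/k<\nu^*(G).
\]
Consequently it suffices to exhibit, for every $k$, a graph $G$ whose fractional packing number $\nu^*(G)$ has (in lowest terms) a denominator not dividing $k$; in particular it is enough to construct graphs whose denominators are divisible by arbitrarily large primes, since for any $k$ one may then pick such a $G$ with denominator divisible by a prime $p>k$ and conclude $p\nmid k$.

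The heart of the proof is therefore the construction of graphs with $\nu^*$ of unbounded denominator, and this is where I expect the real difficulty to lie. Symmetric examples are useless: any edge-transitive graph containing a triangle has $\nu^*=|E|/3$ (denominator dividing $3$), and any configuration in which the binding constraints merely couple triangles in pairs tends to force a half-integral optimum (denominator $2$), by the half-integrality of the fractional vertex-cover/stable-set polytope. To escape this one must use genuinely asymmetric graphs in which edges lie in three or more triangles and the optimal vertex is pinned at a finer fraction. My plan is to build a cyclic ``necklace'' $G_m$ of $m$ identical triangle-gadgets, with adjacent gadgets sharing a fixed set of edges, and to analyse the packing LP by a transfer-matrix recurrence running across the gadgets; closing up the necklace then forces the optimum to solve a cyclic system whose relevant determinant grows with $m$, so that its denominator is divisible by arbitrarily large primes for suitable $m$.

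The two points I expect to be genuinely hard are certifying optimality and controlling the graph. To certify that the high-denominator vertex is the LP optimum (not merely feasible) I would exhibit a matching fractional transversal of equal value and invoke LP duality to pin down the exact value together with its denominator. Separately I must ensure that the gadget and its cyclic assembly are realizable as a simple graph whose \emph{only} triangles are the intended ones, since an ``accidental'' triangle would impose an extra constraint and could collapse the denominator; so part of the work is designing the gadget with enough separation between non-adjacent pieces to preclude unwanted triangles. Once both are in place, the divisibility reduction above converts the growth of denominators directly into the failure of every fixed $k$, completing the packing case and hence Theorem~\ref{thm:integrality}.
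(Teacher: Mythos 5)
Your transversal half is correct and complete, and it takes a genuinely different (and simpler) route than the paper. The key observation that a constant weight factors out of the integer program, $\tau_k(G)=k\,\tau(G)$, is right: the feasible region $\{y\in\mathbb{Z}_+^E : A^{\top}y\ge \mathbf{1}\}$ does not depend on $w$, so the whole statement collapses to exhibiting a single graph with an integrality gap $\tau(G)\neq\tau^*(G)$, and your $K_5$ computation ($\tau^*=\nu^*=\tfrac{10}{3}$ by the uniform primal/dual pair, $\tau=10-6=4$ by Mantel) is correct. The paper instead handles both halves at once with one family of graphs and a denominator argument, so your route buys a much shorter proof of the $\tau$-statement; note, however, that no such shortcut exists for $\nu$, since $\nu_k(G)$ is \emph{not} $k\,\nu(G)$ in general, which is exactly why the packing half is the real content of the theorem. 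Your reduction for that half --- $\nu_k(G)\le k\,\nu^*(G)$, hence $k\,\nu^*(G)\notin\mathbb{Z}$ forces $\nu_k(G)/k<\nu^*(G)$, so it suffices to produce graphs whose $\nu^*$ has denominator not dividing $k$ --- is precisely the closing argument of the paper (and you can even drop the primes: since every divisor of $k$ is at most $k$, graphs with unbounded denominators already suffice).

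The genuine gap is that you never carry out the construction that this reduction demands, and that construction is the heart of the theorem. Your ``necklace of triangle-gadgets'' with a transfer-matrix analysis is a research plan, not a proof: no gadget is defined, no LP value is computed, no dual certificate is exhibited, and you yourself flag the two steps you cannot yet do (certifying optimality, and excluding accidental triangles that would collapse the denominator). It is not at all evident that a cyclic assembly produces denominators that grow --- cyclic symmetry tends to produce the very averaging/half-integrality phenomena you are trying to avoid. For comparison, the paper resolves this with a recursive construction: $G_k$ is built from the $5$-wheel $W_5$ by replacing each of its ten edges with a copy of $G_{k-1}$ (identified at terminal vertices), and Lemma 4.1 proves
\[
\nu^*(G_k)=\tau^*(G_k)=\frac{5}{2^k}\left(\frac{20^k-1}{19}\right),
\]
which is an odd integer divided by $2^k$, by exhibiting an explicit fractional packing (weight $2^{-j}$ on triangles of ``height'' $j$) and a matching recursively defined fractional transversal $g_{k,0}$; equality of the two values certifies optimality by LP duality. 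That recursive self-similarity is what makes both the optimality certificate and the denominator growth tractable, and it is exactly the ingredient missing from your proposal. Until you produce a concrete family with certified $\nu^*$ of unbounded denominator, the packing half of the theorem remains unproved.
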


\section{Comparing $\tau_w$ and $\nu_w^*$ }

\label{sec:nuvstaustar}

In this section we establish Theorem \ref{thm:wKriv}. Here we prefer to work in the setting of multigraphs, rather than weighted graphs (see the discussion in the introduction about their equivalence). Given this correspondence, part \ref{subthm:wKriv:2} of Theorem \ref{thm:wKriv} follows immediately from Krivelevich's original proof applied to multigraphs. To show part \ref{subthm:wKriv:1}, we require the following two preliminary results. Recall that an \emph{edge cut} in a multigraph $G$ is a set $F\subseteq E(G)$ for which there exists a vertex set $W\subseteq V(G)$ where $F$ is the set of edges with one end in $W$ and one end in $V(G)\setminus W$.

\begin{theorem}[Edwards \cite{Edw}]
\label{thm:edge cut}
If $G$ is a multigraph with $e$ edges, then $G$ has an edge cut of size at least $e/2 + \sqrt{e/8} - 1 $.
\end{theorem}

\begin{lemma}
\label{lem:bigindep}
If\/ $G$ is a triangle-free multigraph with $v$ vertices, then $G$ has an independent vertex set of size at least $\sqrt{v/3}$.
\end{lemma}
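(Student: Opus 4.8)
The plan is to combine two complementary lower bounds on the independence number and balance them against each other. First I would dispose of the multigraph issue: both independence of a vertex set and the presence of a $K_3$-subgraph depend only on the underlying simple graph, since parallel edges create neither new triangles nor new adjacencies. Hence I may assume $G$ is simple, on $v$ vertices, and write $\Delta$ for its maximum degree.

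The key structural observation is that in a triangle-free graph the neighborhood $N(x)$ of any vertex $x$ is an independent set: two adjacent neighbors of $x$ would close a triangle with $x$. Applied to a vertex of maximum degree, this yields an independent set of size $\Delta$ — the first bound. The second bound is the standard greedy one: repeatedly select a vertex and delete it together with its at most $\Delta$ neighbors; each step removes at most $\Delta+1$ vertices, so the process produces an independent set of size at least $v/(\Delta+1)$.

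I would then split into two cases according to the threshold $\tfrac12\sqrt{v}$. If some vertex has degree at least $\tfrac12\sqrt{v}$, its neighborhood is already an independent set of the required size. Otherwise $\Delta < \tfrac12\sqrt{v}$, and the greedy bound gives an independent set of size at least $v/(\Delta+1) > v/(\tfrac12\sqrt{v}+1)$, and a short computation (cross-multiplying and simplifying to $\tfrac32\sqrt{v}\ge 1$) shows this exceeds $\tfrac12\sqrt{v}$ for every $v\ge 1$. Taking the better of the two cases proves the claim.

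There is no serious obstacle here — the argument is a routine Ramsey-type dichotomy — so the only point requiring care is the bookkeeping that forces the constant to land exactly at $\tfrac12$: the threshold must be chosen so that both branches clear it \emph{simultaneously}, and one must verify the chosen threshold works for all $v\ge 1$ rather than merely asymptotically. Since the target $\tfrac12\sqrt{v}$ sits comfortably below the true order $\sqrt{v}$ of the independence number of a triangle-free graph, both inequalities hold with room to spare, which is precisely what makes pinning down the exact constant painless.
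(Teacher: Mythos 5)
Your proof is correct and follows essentially the same argument the paper gives: the dichotomy on whether some vertex has degree at least $\tfrac12\sqrt{v}$ (in which case its neighborhood is independent by triangle-freeness) versus the greedy deletion bound $v/(\Delta+1)$ when all degrees are small. The paper only sketches this in one sentence, and your write-up fills in the same details (the multigraph reduction and the arithmetic check) correctly.
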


\noindent{\it Proof sketch.} To see this, observe that either $G$ has a vertex of degree at least $\sqrt{v/3}$ whose neighbours form an independent set, or the greedy algorithm (choosing a vertex to add to the independent set and deleting its neighbours) yields an independent set of the desired size.
\hfill$\Box$

\bigskip
The factor of $\sqrt{3}$ in the above lemma can easily be improved, but is all we require for our theorem.  In fact, a difficult theorem due to Kim \cite{Kim} gives a best possible improvement to the above lemma, showing that the conclusion may be improved to find an independent set of size $\Omega( \sqrt{v \log v} )$.  However, applying his theorem instead of our easy lemma would not improve the bound we achieve.

The only additional ingredient required for the proof is the notion of complementary slackness.  This is a fundamental property in the world of linear programming, and can be found in most books on the subject, such as \cite{Sch}.
With this we are ready to establish property \ref{subthm:wKriv:1} in Theorem~\ref{thm:wKriv}.

\begin{theorem} If\/ $G$ is a multigraph, then $\tau(G) \leq 2\nu^*(G)-\sqrt{\nu^*(G)/6}+1$.
\end{theorem}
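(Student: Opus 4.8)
The plan is to start from an optimal solution $y$ of the fractional transversal LP \eqref{eq:s2}, so that $\sum_{e} y(e) = \tau^*(G) = \nu^*(G)$, and to round it to an integral transversal while beating the factor $2$ by a term of order $\sqrt{\nu^*}$. First I would split the edges at the threshold $1/2$: let $H=\{e: y(e)\ge \tfrac12\}$ be the \emph{heavy} edges and let $G_L$ be the subgraph on the remaining \emph{light} edges. Every triangle either meets $H$ or has all three edges light, so it suffices to place all of $H$ into the transversal and then add a transversal of the triangles of $G_L$. Each heavy edge carries $y$-weight at least $\tfrac12$, so $|H|\le 2\sum_{e\in H}y(e)$; this recovers Krivelevich's factor $2$ (Theorem \ref{thm:Kriv}) on the heavy part, and the whole game is to gain the $\sqrt{\nu^*}$ correction on top.

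The key device for the light part is that any edge-cut of $G_L$ produces a transversal: if $\delta(W)$ is a cut, then the edges that do \emph{not} cross it (those lying inside a shore) meet every triangle, since a triangle can never have all three of its edges crossing a single bipartition. Before applying this I would discard the light edges that lie in no light triangle, as they are never needed; call the resulting triangle-carrying light subgraph $G_L'$. By Lemma \ref{lem:bigcut}, $G_L'$ has a cut of size at least $|E(G_L')|/2+\sqrt{|E(G_L')|}/4$, so the number of non-crossing light edges is at most $|E(G_L')|/2-\sqrt{|E(G_L')|}/4$. Together with $H$ this yields a transversal of size at most $|H| + |E(G_L')|/2 - \sqrt{|E(G_L')|}/4$, in which the subtracted square-root term is exactly the hoped-for saving.

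The main obstacle is the bookkeeping that converts this into the clean bound $2\nu^* - \tfrac14\sqrt{\nu^*}$, and I expect this to be where all the work lies. Matching the leading term requires $|E(G_L')|\le 4\sum_{e\in E(G_L')}y(e)$, i.e.\ that the triangle-carrying light edges have average $y$-weight at least $\tfrac14$; this looks believable because every all-light triangle has three edges below $\tfrac12$ summing to at least $1$ (forcing in-triangle average at least $\tfrac13$), but edges shared among many triangles make it delicate, and it is precisely here that optimality of $y$ (via complementary slackness with an optimal fractional packing) should be exploited. Matching the \emph{correction} requires that $G_L'$ carry enough edges for $\sqrt{|E(G_L')|}$ to dominate $\sqrt{\nu^*}$; when $G_L'$ is too triangle-sparse for the cut to furnish this gain, the saving must instead come from the complementary regime, and this is where I anticipate Lemma \ref{lem:bigindep} entering as the dual tool to Lemma \ref{lem:bigcut}: a triangle-free residue on many vertices harbours a large independent set, which one converts into additional fractional packing, hence additional $\nu^*$, hence the required slack against $2\nu^*$. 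Calibrating the two square-root estimates so that whichever case is worse still loses at least $\tfrac14\sqrt{\nu^*}$ is the crux of the argument.
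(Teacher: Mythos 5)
Your outline shares several of the paper's ingredients (a $\tfrac12$-threshold on an optimal fractional transversal, Lemma~\ref{lem:bigcut} applied so that the complement of a large cut covers the all-light triangles, Lemma~\ref{lem:bigindep} held in reserve, complementary slackness), but it is missing the one idea that makes the square-root saving appear, and the place where your plan fails is not a technicality: it is exactly the extremal configuration for this theorem. In your scheme an edge with $y(e)=\tfrac12$ is ``heavy'', enters the transversal at cost $1=2y(e)$ with zero slack, and all of your anticipated saving must come from the cut on all-light triangles. Now take the paper's tight example (for concreteness the $5$-wheel: an apex vertex joined to the triangle-free graph $C_5$, with optimal $y$ equal to $\tfrac12$ on the five spokes and $0$ on the rim). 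Every triangle consists of two heavy spokes and one weight-$0$ rim edge, so there is no all-light triangle at all, your $G_L'$ is empty, and your transversal is the entire heavy set, of size exactly $2\nu^*(G)$ --- no saving whatsoever, while the theorem demands $2\nu^*(G)-\tfrac14\sqrt{\nu^*(G)}$. Your fallback for this regime is not an argument: an independent set in a triangle-free residue cannot ``convert into additional fractional packing, hence additional $\nu^*$'', because $\nu^*(G)$ is a fixed constant of the graph; what is needed in this case is a \emph{smaller transversal}, not a larger packing.

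The missing mechanism in the paper is precisely a way to save on the weight-$\tfrac12$ edges. The paper splits the positive-weight edges into $A$ ($0<g<\tfrac12$), $B$ ($g=\tfrac12$) and $C$ ($g>\tfrac12$), builds an auxiliary graph whose \emph{vertices} are the edges of $B$ (two adjacent when they lie in a common tight triangle), shows this auxiliary graph is triangle-free, and uses Lemma~\ref{lem:bigindep} to extract an independent set $I\subseteq B$ with $|I|\ge\tfrac12\sqrt{|B|}$; the edges of $I$ are then kept \emph{out} of the transversal and thrown into the graph to which Lemma~\ref{lem:bigcut} is applied, namely $G[A\cup I]$. This is what produces a saving of order $\sqrt{|A|+|B|}$ even when no all-light triangle exists (in the apex example it yields the transversal $B\setminus I$, of size $n-\Omega(\sqrt{n})$). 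The final calibration additionally needs $|A|\ge|C|$, which the paper proves by perturbing the optimal $g$ ($+\varepsilon$ on $A$, $-\varepsilon$ on $C$); your proposal has no analogue of either step. By contrast, the gap you yourself flagged --- the unproved claim $|E(G_L')|\le 4\sum_{e\in G_L'}y(e)$ --- is the repairable part, and in fact one should not compare $|E(G_L')|$ to the $y$-weight of $G_L'$ at all: using an optimal fractional packing $f$, complementary slackness gives that every edge of positive $y$-weight is $f$-tight, whence $|H|+\tfrac12|E(G_L')|=\sum_{t}f(t)\bigl(|E(t)\cap H|+\tfrac12|E(t)\cap G_L'|\bigr)$, and a short case check shows each tight triangle contributes at most $2$, so this quantity is at most $2\nu^*(G)$. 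That incidence count is essentially how the paper obtains its leading-term inequality $\nu^*\ge\tfrac{a}{4}+\tfrac{b+c}{2}$. So the constant $2$ is within reach of your method; the $\tfrac14\sqrt{\nu^*}$ gain is exactly what it cannot produce.
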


\begin{proof}
Let $E=E(G)$ and let $\cT=\cT(G)$. Fix an optimal fractional transversal $g\colon E \to \mathbb{R}$ and an optimal fractional packing $f\colon \cT \to \mathbb{R}$ so that we have $f(\cT)=\nu^*(G)=\tau^*(G)= g(E).$  If a triangle $t\in \cT$ has edges $e_1, e_2, e_3$, we say that $t$ is \DEF{tight}  if $g(e_1)+g(e_2)+g(e_3)=1$. Similarly, we say that an edge $e\in E$ is  \DEF{tight} if $\sum_{t\in \cT, e\in t}f(t)=1$. Observe that by the complementary slackness for dual linear programs \eqref{eq:s1} and \eqref{eq:s2}, $g(e)>0$ implies that $e$ is tight, and $f(t)>0$ implies that $t$ is tight. Hence, when computing $f(\cT)$ and $g(E)$, we need only consider tight triangles and tight edges, respectively.

Let $Z$ denote the set of all edges $e\in E$ having $g(e)=0$. All edges in $E\setminus Z$ are tight by complementary slackness, and we partition them into four sets $A, B, C, D$ as follows: for every edge $e\in E\setminus Z$, let $e\in A$ if $0< g(e)<1/2$; $e\in B$ if $g(e)=1/2$; $e\in C$ if $1/2<g(e)<1$; $e\in D$ if $g(e)=1$. The tight triangles of $G$ can then be partitioned into five sets, $\cT_1, \ldots, \cT_5$ where for $i\in\{1, 2, 3\}$,  a tight triangle $t$  is a member of $\cT_i$ if $t$ has exactly $i$ edges in $A$. Since $A$ is a set of tight edges, if we let $|A|=a$ this immediately implies that
\begin{equation}\label{eq:a}
a= \sum_{e\in A} \Bigl(\,\sum_{t\in \cT, e\in t} f(t) \Bigr)  = f(\cT_1)+2f(\cT_2)+3f(\cT_3).
\end{equation}
Tight triangles with no edges in $A$ have either two edges in $Z$ and one edge in $D$, or one edge in $Z$ and two edges in $B$. Let $\cT_4$ denote the former set and let $\cT_5$ denote the latter. Note that each triangle in $\cT_1$ has one edge in each of $A$, $Z$ and $B\cup C$, and each triangle in $\cT_2$ has two edges in $A$ and one edge in $B\cup C$. The triangles of $\cT_3$ and $\cT_4$ are the only tight triangles with no edges in $B\cup C$. Since $B\cup C \cup D$ is a set of tight edges, if we let $|B|=b$, $|C|=c$, and $|D|=d$, we thus get
\begin{equation}\label{eq:bplusc}
b+c=\sum_{e\in B\cup C} \Bigl(\,\sum_{t\in \cT, e\in t} f(t)  \Bigr) = f(\cT_1)+f(\cT_2)+2f(\cT_5).
\end{equation}
and
\begin{equation}\label{eq:d}
d=f(\cT_4).
\end{equation}
We now use \eqref{eq:a}, \eqref{eq:bplusc} and \eqref{eq:d} to get a lower bound for $\nu^*(G)$, as follows:
\begin{align*}
\nu^*(G)&=f(\cT)\\
  &= f(\cT_1)+f(\cT_2)+f(\cT_3)+f(\cT_4)+f(\cT_5)\\
  &\geq \left( \tfrac{1}{4}f(\cT_1)+\tfrac{1}{2}f(\cT_2)+ \tfrac{3}{4}f(\cT_3)\right)+\left( \tfrac{1}{2}f(\cT_1)+ \tfrac{1}{2}f(\cT_2)+f(\cT_5) \right)+f(\cT_4)\\
  &= \frac{a}{4} +\frac{b+c}{2}+d.
\end{align*}
To complete the proof we will show that $G$ has a transversal of size at most
\begin{equation}\label{eq:transize}
2\left( \frac{a}{4} +\frac{b+c}{2}+d \right)-\frac{1}{\sqrt{6}}\sqrt{\frac{a}{4} +\frac{b+c}{2}+d}\,+1
\end{equation}
and use the fact that the function $2x - \sqrt{\tfrac{x}{6}}+1$ is increasing for $x\ge \tfrac{1}{4}$ combined with the inequality of the previous paragraph.

Let $H$ be the graph with vertex set $B$, where two elements $e, e'$ of $B$ are adjacent if $e,e'$ are two edges of some tight triangle in $G$. Note that a tight triangle with two edges in $B$ must have its third edge in $Z$. Since $g$ is a fractional transversal, no triangle can have all three edges in $Z$, which implies that $H$ is triangle-free. Hence, by Lemma \ref{lem:bigindep}, $H$ has an independent vertex set $I\subseteq B$ of size at least $\sqrt{b/3}$. We claim that $(B\setminus I)\cup C\cup D$, along with the complement of any edge-cut in $G'=G[A\cup I]$, is a transversal of $G$. To see this, first note that a triangle in $G$ has at most 2 edges in $Z$, and if it has exactly 2 edges in $Z$ then its third edge is in $D$. If a triangle in $G$ contains an edge of $Z$ but no edge of $D$ then it contains either an edge of $C$, or two edges of $B$ (in which case it is tight and thus contains at least one edge of $B\setminus I$). Any triangle containing no edges of $Z$ or $D$ either contains an edge of $(B\setminus I)\cup C$ or only edges of $A\cup I$ (in which case at most two of them are in an edge-cut of $G'$).

By Theorem \ref{thm:edge cut}, if $G'$ has $e'=|A|+|I|$ edges, then it has an edge-cut $S$ of size at least $e'/2+\sqrt{e'/8}-1$. Let $R$ be the edge-complement of $S$ in $G'$, and let $L=(B\setminus I)\cup C \cup D \cup R$. Then $L$ is a transversal of $G$, and moreover,
\begin{align*}
|L| &\leq  (b-| I |) +c+d+ \left(e'/2 - \sqrt{e'/8} + 1 \right)\\
  &= (b-|I|) +c+d  + \left(\frac{a+|I|}{2} - \sqrt{\frac{a+| I |}{8}} \right) + 1\\
  &\leq \frac{a}{2}+b+ c+d -\left(\frac{|I|}{2}+ \sqrt{\frac{a}{8}} \, \right) + 1\\
  &\leq \frac{a}{2}+b+ c +2d - \frac{1}{\sqrt{6}}\left(\sqrt{\frac{b}{2}}+ \sqrt{\frac{3a}{4}}+d \right) + 1\\
  &\leq  2\left(\frac{a}{4}+\frac{b+ c}{2}+d\right)  - \frac{1}{\sqrt{6}}\sqrt{\frac{3a}{4}+\frac{b}{2} + d} + 1\ .
\end{align*}
If $a \geq c$, then the right-hand side in the above inequality is at most \eqref{eq:transize}, as desired. Thus it suffices to prove that $a\geq c$ is implied by the optimality of the fractional transversal $g$. To see this, note that if $a<c$, then we may define $g_{\varepsilon}\colon E\to \mathbb{R}$ from $g$ by  adding $\varepsilon>0$ to each edge in $A$ and subtracting $\varepsilon$ from each edge in $C$. Every tight triangle has at least as many edges in $A$ as in $C$, so $g'$ is a transversal of the tight triangles for every $\varepsilon>0$. Given that the remaining triangles are not tight, there is a sufficiently small value of $\varepsilon$ such that $g'$ is a fractional transversal of $G$. However then $g'(E)<g(E)$, so the optimality of $g$ yields the desired contradiction.
\end{proof}

This result is best possible up to a logarithmic factor. To see this, let $G$ be a graph formed by taking any $n$-vertex triangle-free graph $H$ and adding one apex vertex completely joined to $H$. Taking each edge incident to the apex with value $1/2$ gives a fractional transversal in $G$. This shows that $\tau^*(G) \leq n/2$.
Suppose now that $R$ is a transversal for $G$. If $R$ contains an edge $xy$ of $H$, we may replace this edge by the edge joining the apex with $x$, and this would still be a transversal. Therefore, we may assume that $R$ contains only edges incident with the apex. Let $U\subseteq V(H)$ be the set of endvertices of the edges in $R$ (excluding the apex). Since $R$ is a transversal, the set $V(H)\setminus U$ is an independent set in $H$. If $H$ is a \DEF{Ramsey graph} (a largest triangle-free graph without an independent set of size $k$), then the bound for triangular Ramsey numbers $r(3,k)$ (the aforementioned result of Kim \cite{Kim}) shows that $k=\Theta(\sqrt{n\log n})$ where $n = r(3,k)-1$. In particular, $|V(H)\setminus U| < k$, so $|R| = |U| \ge n - \Theta(\sqrt{n\log n}\,)$. This implies that $\tau(G) \ge 2\,\tau^*(G) - \Theta\big(\sqrt{\tau^*(G)\log \tau^*(G)}\,\big)$.

\section{Graphs on a surface}\label{sec:surfaces}

In this section we prove that the weighted version of Tuza's conjecture holds for planar graphs by proving a more general statement, Theorem \ref{thm:surface}. Tuza himself proved the unweighted version of the planar case (Theorem \ref{thm:tuza planar}). Our argument is quite similar to his proof, but in some ways the introduction of weights simplifies the situation.

\begin{proof}[Proof of Theorem~\ref{thm:surface}]
Let $G$ and $w$ be a counterexample of Theorem \ref{thm:surface} so that $|E| + w(E)$ is minimum.  We shall establish properties of $G,w$ in a few steps. Let us observe that none of these properties uses embeddability in a surface, but all reductions used in the proofs preserve embeddability and do not introduce new triangles.

\medskip
\begin{enumerate}[label=\textup(\arabic*), topsep=0pt, leftmargin=*]
\item\label{enu:w(e)>0}
  $w(e) > 0$ for every $e \in E$.
\end{enumerate}

Suppose (for a contradiction) that $w(e)=0$, and consider the graph $G' = G - e$ and the weight function $w'$ obtained by restricting $w$ to $E \setminus \{e\}$.  Since adding $e$ to a transversal of $G'$ yields a transversal of $G$ with the same weight, we have $2 \nu_w(G) = 2 \nu_{w'}(G') \ge \tau_{w'}(G') = \tau_w(G)$ which is a contradiction.

\medskip
\begin{enumerate}[resume*]
\item\label{enu:e in two triangles}
  Every $e \in E$ is in at least two triangles.
\end{enumerate}

If $e$ is not in any triangle, then $G - e$ is a smaller counterexample, which is contradictory.  Next suppose that $e$ is in exactly one triangle with edge set $\{e,f_1,f_2\}$.  Now modify $w$ to form a new weight function $w'$ by setting $w'(e) = w(e) -1$ and $w'(f_i) = w(f_i) - 1$ for $i=1,2$.  Let $R$ be an (inclusion-wise) minimal transversal of $G$ with $w'(R) = \tau_{w'}(G)$.  Clearly, if $R$ contains $f_1$ or $f_2$, then it does not contain $e$. Thus, we conclude:
\[2 \nu_w(G) \ge 2 \nu_{w'}(G) + 2 \ge \tau_{w'}(G) + 2 = w'(R) + 2 \ge w(R) \ge \tau_w(G). \]
This contradiction proves \ref{enu:e in two triangles}.

\medskip
\begin{enumerate}[resume*]
\item\label{enu:w(e)<=1}
  If $e\in E$ is in exactly two triangles, then $w(e) \le 1$.
\end{enumerate}

Suppose (for a contradiction) that \ref{enu:w(e)<=1} fails and the edge $e$ with $w(e) \ge 2$ is in exactly two triangles with edge sets $\{e,f_1,f_2\}$ and $\{e,f_3,f_4\}$.  Next, modify $w$ to form a new weight function $w'$ by setting
$w'(e) = w(e) - 2$ and $w'(f_i) = w(f_i) - 1$ for $1 \le i \le 4$.  Let $R$ be a minimal transversal of $G$ with $w'(R) = \tau_{w'}(G)$.  By minimality, $R$ cannot contain $e$ and at least one of $f_1,f_2$ and at least one of $f_3,f_4$.
It follows from this that $w(R) \le w'(R) + 4$.  This gives us
\[2 \nu_w(G) \ge 2 \nu_{w'}(G) + 4 \ge \tau_{w'}(G) + 4 = w'(R) + 4 \ge w(R) \ge \tau_w(G) \]
which is a contradiction.

\medskip
\begin{enumerate}[resume*]
\item\label{enu:N(v) no induced cycle}
  $G$ does not contain a vertex $v$ so that the set $N(v)$ of its neighbors induces a cycle.
\end{enumerate}

Suppose (for a contradiction) that \ref{enu:N(v) no induced cycle} is false and that $N(v)$ induces a cycle with (cyclic) order $u_1, u_2, \ldots, u_k$.  Note that by \ref{enu:w(e)>0} and \ref{enu:w(e)<=1} we have $w(vu_i) = 1$ for
every $1 \le i \le k$.  Now set $G' = G - v$ and let $w'$ be the function obtained from the restriction of $w$ to $E(G')$ by setting $w'(u_{2i-1} u_{2i}) = w(u_{2i-1} u_{2i}) - 1$ for $1 \le i \le \lfloor \frac{k}{2} \rfloor$.  Let $R'$ be a transversal of $G'$ with $w'(R') = \tau_{w'}(G')$.  If $R'$ does not use any of the edges
$u_{2i-1} u_{2i}$ for $1 \le i \le \lfloor \frac{k}{2} \rfloor$ then we may extend $R'$ to a transversal $R$ of $G$ by adding $\lceil \frac{k}{2} \rceil$ edges of the form $v u_j$ and we have
that $w(R) \le w'(R') + \lceil \frac{k}{2} \rceil \le w'(R') + 2 \lfloor \tfrac{k}{2} \rfloor$.  On the other hand, if $R'$ uses an edge of the form $u_{2i-1} u_{2i}$, then we may extend $R'$ to a transversal $R$
of $G$ by adding  $\lfloor \frac{k}{2} \rfloor$ edges of the form $v u_i$.  In this case we have $w(R) \le w(R') + \lfloor \frac{k}{2} \rfloor \le w'(R') + 2 \lfloor \frac{k}{2} \rfloor$.  This gives us
\begin{align*}
2 \nu_w(G) & \ge 2 \nu_{w'}(G') + 2 \lfloor \tfrac{k}{2} \rfloor \ge \tau_{w'}(G') + 2 \lfloor \tfrac{k}{2} \rfloor\\
  & = w'(R') + 2\lfloor \tfrac{k}{2} \rfloor \ge w(R) \ge \tau_w(G)
\end{align*}
which is a contradiction.

\bigskip

Let us now consider the embedding of $G$. If every triangle is facial (i.e., it bounds a face),
then it follows from \ref{enu:e in two triangles} that every edge of $G$ is in exactly two facial triangles and then applying \ref{enu:N(v) no induced cycle} to any vertex gives us a contradiction.  Otherwise, we may choose a non-facial surface-separating triangle $t$ so that the number of edges in one of the surface components of $t$, say $S$, is minimal.  Now every edge properly inside $S$ is in exactly two facial triangles (again by \ref{enu:e in two triangles}) and then applying \ref{enu:N(v) no induced cycle} to any vertex properly inside $S$ gives us a contradiction.  This completes the proof.
\end{proof}

\section{Comparing $\tau_w$ and $\nu_w$}

\label{sec:tau vs nu}

In this section we will establish Theorem~\ref{thm:whaxell}. As in Section \ref{sec:nuvstaustar}, we prefer to work in the setting of multigraphs, rather than weighted graphs (see the discussion in the introduction about their equivalence).

\begin{theorem}\label{thm:tauvsnu}
  Let\/ $G$ be a multigraph. Then $~\tau(G)\leq \left(3-\tfrac{2}{25}\right)\nu(G)$.
\end{theorem}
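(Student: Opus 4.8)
The plan is to adapt Haxell's original approach for proving $\tau(G) \le 2.87\,\nu(G)$ in simple graphs to the multigraph setting, keeping track of how multiplicities affect the key counting arguments. Haxell's method is essentially a clever local/greedy argument: one starts with a maximum packing $\cT'$ (so $|\cT'| = \nu(G)$), and wants to construct a transversal using only the edges appearing in these $\nu(G)$ triangles, but more cleverly than just taking all $3\nu(G)$ of them. The idea is that each packed triangle has three edges, and we want to select at most roughly $\left(3 - \tfrac{2}{25}\right)$ edges per triangle on average to form a transversal. The savings come from identifying triangles in the packing for which two edges (rather than all three) suffice to cover all triangles of $G$ meeting that packed triangle, and then bounding the number of ``bad'' triangles where we genuinely need all three.

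\emph{First I would} set up the extremal/greedy framework. Let $\cT'$ be a maximum packing and let $F$ be the set of all edges lying in triangles of $\cT'$; note $|F| \le 3\nu(G)$. The strategy is to show we can delete a positive fraction of these edges — at least $\tfrac{2}{25}\nu(G)$ of them — while retaining a transversal. For each packed triangle $t = xyz \in \cT'$, consider whether we can afford to keep only two of its three edges. This is possible precisely when one edge, say $xy$, is ``redundant'': every triangle of $G$ using $xy$ also uses another kept edge. \emph{The main work} is a discharging or averaging argument over $\cT'$: one classifies packed triangles by how their edges interact with other triangles of $G$ (in particular, how many external triangles force us to retain each edge), and shows that not too many triangles can simultaneously be ``fully needed.'' In the multigraph setting the subtlety is that a single edge now has a multiplicity $w(e)$, and a triangle in $G'$ uses one parallel copy; one must verify that the covering and packing arguments respect this — for instance, that when we ``keep'' an edge to cover triangles, keeping one parallel copy may not suffice, so the weighted accounting must be done carefully.

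\emph{I expect the main obstacle} to be exactly this passage from simple-graph counting to the weighted/multigraph counting, which is flagged in the introduction (``it appears to make things more difficult for Haxell's theorem (in fact we get a slightly larger constant factor)''). In the unweighted proof, each triangle of the packing has three distinct edges and local modifications are clean; with multiplicities, a triangle $xyz$ might coexist with many parallel triangles on the same vertex triple, and an edge $xy$ of high multiplicity can be shared by many triangles in ways that break the simple case-analysis. This is presumably why the constant degrades from $2.87$ to $2.92$ (i.e.\ $3 - \tfrac{2}{25}$ versus a smaller deficit): the weighted argument must be more conservative about how much can be saved per triangle.

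\emph{Concretely, I would} (i) fix a maximum packing $\cT'$; (ii) for each packed triangle, attempt to designate a savable edge, subject to a global consistency condition ensuring the union of kept edges remains a transversal; (iii) phrase the existence of enough savings as a combinatorial optimization — likely via a weighting/discharging scheme assigning ``credit'' to triangles that admit savings and ``debt'' to obstructing configurations; and (iv) extract the constant $\tfrac{2}{25}$ from the worst-case local configuration, which should correspond to a small explicit gadget (plausibly built around $W_5$ or $K_4$-type structures, echoing the extremal examples elsewhere in the paper). The delicate step throughout is ensuring that the locally-defined savings can be realized simultaneously and globally, which is where one typically invokes a maximality/exchange argument against the packing $\cT'$ or a careful fractional-relaxation bound. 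Because the paper advertises a weaker constant than Haxell's, I anticipate the proof trades some of Haxell's sharper local optimization for a more robust argument that survives the presence of parallel edges.
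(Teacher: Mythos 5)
Your proposal correctly identifies the lineage of the argument: the paper does follow Haxell's framework, starting from a maximum packing and building transversals whose sizes are bounded against $\nu(G)$, and you are right that the multigraph setting is what degrades the constant. However, what you have written is a plan, not a proof: every technically substantive step is deferred with phrases like ``the main work is a discharging or averaging argument'' and ``I anticipate the proof trades\dots''. The actual content of the paper's proof is entirely absent. Concretely, the paper introduces a cascade of auxiliary packings --- $\cB_1$ (maximum independent family of type-$(\cB,1)$ triangles), $\cB_2$ (type-$(\cB,2)$ triangles in $G-E[\cB_1]$), then $\cB'$, $\cB_1'$, and the sets $\cI$ and $\cK$ --- together with real parameters $\gamma,\beta,\alpha,\delta,\eta,\delta_0$ recording their sizes relative to $\nu$. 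It then proves five separate upper bounds on $\tau$ as linear functions of these parameters (Lemmas~\ref{lem:3-2/3gamma}, \ref{lem:3/2+5/2gamma+2beta}, \ref{lem:3+3delta-beta}, \ref{lem:lastbut}, \ref{lem:last}), each requiring its own transversal construction and its own exchange argument against the maximality of the various packings (including the switching Lemma~\ref{lem:switch}), and finally takes an explicit convex combination with weights $\tfrac{1}{5},\tfrac{4}{75},\tfrac{8}{75},\tfrac{8}{25},\tfrac{8}{25}$ to extract $\tau\le\tfrac{73}{25}\nu$. None of this bookkeeping, none of the five inequalities, and no mechanism for producing the constant $\tfrac{2}{25}$ appears in your proposal; ``extract the constant from the worst-case local configuration'' does not identify that mechanism.

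Two of your structural guesses are also off in ways that matter. First, the transversals are \emph{not} built ``using only the edges appearing in these $\nu(G)$ triangles'': the paper's constructions crucially include edges outside the packing, namely the parallel classes $E'(T)$ joining $v(T)$ to $\hat v(T)$ and the edge pairs $f(T)$, and it is exactly here that the multigraph difficulty concentrates --- in a simple graph $E'(T)$ is at most one edge, while in a multigraph it is an arbitrarily large parallel class, which forces the more conservative accounting (e.g.\ the bound $|E'(U)\sm C_1|\le 2$ in Lemma~\ref{lem:3-2/3gamma} requires a delicate exchange argument that would be trivial in the simple case). Second, part of the savings comes not from per-triangle edge deletion but from a max-cut step: in Lemma~\ref{lem:3/2+5/2gamma+2beta} one covers the type-$(\cB,3)$ triangles by deleting the edges of a bipartite subgraph containing at least half the edges of $H$. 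So while your high-level intent matches the paper, the proposal has a genuine gap: it contains no argument that could be checked or completed into a proof of the stated bound.
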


In the case of (unweighted) simple graphs, Haxell~\cite{Hax} proved that $\tau \leq c \nu$, where $c \approx 2.866$. Our constant $3-\tfrac{2}{25}=2.92$ is slightly larger. The rest of this section is devoted to the proof of Theorem~\ref{thm:tauvsnu}. The proof structure follows roughly the same lines as in~\cite{Hax}, and we have kept similar notation when it was possible. The proofs of Lemmas~\ref{lem:3/2+5/2gamma+2beta} and~\ref{lem:3+3delta-beta} are taken directly from \cite{Hax}, and are included here for completeness.

\bigskip

We say that a family $\mathcal{F}$ of triangles in a graph is \DEF{independent} if the elements of $\mathcal{F}$ are pairwise edge-disjoint.
Let $\cB$ be an independent family of triangles in $G$ of size $\nu=\nu(G)$. We say that a triangle in $G$ is of type $(\cB,i)$ if it has exactly $i$ edges in common with the set $E[\cB]$. Note that every triangle in $G$ is of type $(\cB,i)$ for some $i\in\{1,2,3\}$. Let $\cB_1$ be an independent family of triangles of type $(\cB,1)$ of maximum size in $G$, and let $\gamma$ be defined by $|\cB_1|=\gamma\nu$.

\begin{lemma}\label{lem:3-2/3gamma}
  $\tau(G)\leq(3-\frac{2}{3}\gamma)\nu$.
\end{lemma}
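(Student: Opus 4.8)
The plan is to exhibit a transversal of $G$ of size at most $(3-\tfrac{2}{3}\gamma)\nu$ directly, starting from the trivial transversal $E[\cB]$. Since $\cB$ is a maximum packing, every triangle of $G$ shares an edge with $E[\cB]$, so $E[\cB]$ is a transversal of size exactly $3\nu$. The goal is therefore to delete from $E[\cB]$ a set of at least $\tfrac{2}{3}\gamma\nu$ edges (adding back, where necessary, a controlled number of non-$\cB$ edges) while preserving the property of meeting every triangle of $G$.

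The family $\cB_1$ is what drives the saving. Because the $\gamma\nu$ triangles of $\cB_1$ are pairwise edge-disjoint and each is of type $(\cB,1)$, their unique $E[\cB]$-edges form a set $M$ of $\gamma\nu$ distinct edges, and each such edge lies in a unique triangle of $\cB$. I would record, for each $B\in\cB$, the number $d(B)\in\{0,1,2,3\}$ of edges of $B$ lying in $M$, so that $\sum_{B\in\cB} d(B)=\gamma\nu$, and then perform the deletions locally at each $B$: delete a suitable subset of its marked edges, and whenever a deletion threatens to leave a type-$(\cB,1)$ triangle uncovered, re-cover that triangle using the two non-$\cB$ edges of the attached member of $\cB_1$, in a way whose cost is dominated by the saving. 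Arranging the deletions so that the total saving is $\tfrac{2}{3}\gamma\nu=\tfrac{2}{3}\sum_{B}d(B)$ is the combinatorial heart of the argument; the case analysis resembles the bookkeeping in the proofs of Lemmas~\ref{lem:3/2+5/2gamma+2beta} and~\ref{lem:3+3delta-beta}, which are adapted from \cite{Hax}.

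The step I expect to be the main obstacle is verifying that the modified edge set still meets \emph{every} triangle, not merely those of $\cB_1$. The two dangers are the ``book'' of type-$(\cB,1)$ triangles sharing a single deleted edge of $M$, and type-$(\cB,2)$ triangles that could lose both of their $E[\cB]$-edges at once. Here the maximality of $\cB_1$ is essential: any type-$(\cB,1)$ triangle not in $\cB_1$ must share an edge with some member of $\cB_1$, for otherwise it could be added to $\cB_1$, contradicting maximality. This constrains the books enough that the re-covering non-$\cB$ edges already introduced should hit them. Making this covering argument precise, and checking that it is compatible with removing marked edges in the proportion needed to reach $\tfrac{2}{3}\gamma\nu$ rather than something weaker, is the crux; note that the embedding hypotheses play no role, since the argument is purely local to $\cB$ and $\cB_1$.
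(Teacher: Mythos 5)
There is a genuine gap here: your proposal is a plan whose two decisive steps are explicitly deferred (``the combinatorial heart of the argument,'' ``the crux''), and the scheme you do sketch points in the wrong direction. You propose to delete marked edges of $M=\set{e(T)}{T\in\cB_1}$ from $E[\cB]$ and to re-cover each endangered type-$(\cB,1)$ triangle with the two non-$\cB$ edges of the attached member of $\cB_1$; but that trades one deleted edge for two added ones, a net \emph{loss}, so no accounting of this kind can produce a saving of $\tfrac23\gamma\nu$. Moreover the maximality you invoke is that of $\cB_1$, which is not the engine of the proof. What drives the paper's argument is the maximality of $\cB$ itself, exploited through an exchange: for every choice of $T$ or $\hat T$ for each $T\in\cB_1$, the family $(\cB\sm\hat{\cB}_1)\cup\set{T\text{ or }\hat{T}}{T\in\cB_1}$ is again a maximum (hence inclusion-maximal) packing, so every triangle edge-disjoint from $\cB\sm\hat\cB_1$ must meet \emph{both} some $T\in\cB_1$ and its counterpart $\hat T$, and a short geometric check shows such a triangle contains $e(T)$ or an edge of $E'(T)$ (the parallel class between $v(T)$ and $\hat v(T)$). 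This is exactly the opposite bookkeeping from yours: one keeps $e(T)$, deletes the \emph{two other} edges of each $\hat T$ (saving $2\gamma\nu$), and pays only for $C_2=\bigcup_{T\in\cB_1}E'(T)$.

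Even granting that structure, the lemma is not yet proved: the remaining difficulty, which your sketch does not touch, is to bound $|C_2\sm C_1|$ by $\tfrac43\gamma\nu$ rather than the trivial bound. The paper does this with a second exchange argument: letting $\cJ=\set{T\in\cB_1}{E'(T)\subseteq C_1}$ with $|\cJ|=\gamma_0\nu$, one shows that each $U\in\cB_1$ has $|E'(U)\sm C_1|\le 2$, and that when equality holds one of the two edges lies in $E[\cJ]\sm C_1$ for a triangle $T\in\cJ$ with $v(T)=\hat v(T)$ (this uses the maximality of $\cB$ twice, via the two edge-disjoint triangles formed from $E'(U)$ and $(E(U)\cup E(\hat U))\sm\{e(U)\}$). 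The two resulting bounds, $|C_2\sm C_1|\le 2(\gamma-\gamma_0)\nu$ and $|C_2\sm C_1|\le(\gamma+\gamma_0)\nu$, combine convexly to give $\tfrac43\gamma\nu$ and hence $(3-2\gamma)\nu+\tfrac43\gamma\nu=(3-\tfrac23\gamma)\nu$. Neither the exchange characterization of which edges can cover the ``leftover'' triangles, nor this averaging over $\gamma_0$, appears in your proposal, and without them the target constant $\tfrac23$ is out of reach.
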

\begin{proof}
For each $T\in\cB_1$, let $\hat{T}$ denote the triangle in $\cB$ that shares an edge with $T$, let $e(T)$ denote the edge shared by $T$ and $\hat{T}$, let $v(T)$ be the unique vertex of $T$ which is not incident to $e(T)$, let $\hat{v}(T)$ be the unique vertex of $\hat{T}$ which is not incident to $e(T)$, and let $E'(T)$ be the set of edges between $v(T)$ and $\hat{v}(T)$ (see Figure~\ref{fig:B1}).
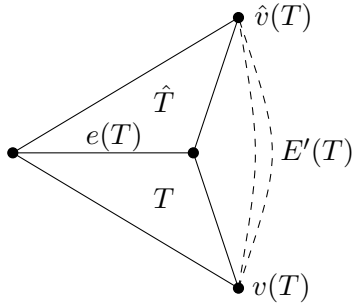
\begin{figure}[htb]
  \begin{center}
\begin{tikzpicture}[scale=0.3]
  \path (0,0) coordinate (x);
  \path (8,0) coordinate (y);
  \path (10,6) coordinate (v');
  \path (10,-6) coordinate (v);
  \path (11,0) coordinate (m1);
  \path (12,0) coordinate (m2);

  \draw (x) -- (y) -- (v) -- (x) -- (v') -- (y);
  \draw[dashed] (v) .. controls (m1) .. (v');
  \draw[dashed] (v) .. controls (m2) .. (v');

  \drawvertex{(x)};
  \drawvertex{(y)};
  \drawvertex{(v)};
  \drawvertex{(v')};

  \drawtext {(6.7,2.4)} {$\hat{T}$};
  \drawtext {(6.7,-2)} {$T$};
  \drawtext {(4.5,0.7)} {$e(T)$};
  \drawtext {(v)++(1.9,0)} {$v(T)$};
  \drawtext {(v')++(2.0,0)} {$\hat{v}(T)$};
  \drawtext {(m2)++(1.5,0)} {$E'(T)$};
\end{tikzpicture}
\end{center}
  \caption{Triangles $T$ and $\hat{T}$.}
  \label{fig:B1}
\end{figure}
Let $\hat{\cB}_1=\set{\hat{T}}{T\in\cB_1}$. From the maximality of $\cB$ it follows that $|\hat{\cB}_1|=|\cB_1|$. Since every family of triangles of the form $(\cB\sm\hat{\cB}_1)\cup\set{T\text{ or }\hat{T}}{T\in\cB_1}$ is an independent family of triangles of size $\nu(G)$, for every triangle $S$ that is edge-disjoint from $\cB\sm\hat{\cB}_1$, there exists $T\in\cB_1$ such that $S$ shares an edge with $T$ as well as with its counterpart $\hat{T}\in\hat{\cB}_1$. Then such a triangle contains either $e(T)$ or an edge from $E'(T)$. Consequently, the set $C=C_1\cup C_2$, where
\begin{align*}
  C_1 & = E[\cB\sm\hat{\cB}_1] \cup \set{e(T)}{T\in\cB_1}\quad\mbox{and}\\
  C_2 & = \textstyle\bigcup\limits_{T\in\cB_1}E'(T),
\end{align*}
is a transversal of $G$. We will show that $|C|\leq(3-\frac{2}{3}\gamma)\nu$. Clearly, $|C|\leq|C_1|+|C_2\sm C_1|=(3-2\gamma)\nu+|C_2\sm C_1|$.
We define the set $\cJ=\set{T\in\cB_1}{E'(T)\subseteq C_1}$, and we define $\gamma_0$ by $|\cJ|=\gamma_0\nu$.

Now consider a triangle $U\in\cB_1$. If $|E'(U)\sm C_1|\neq0$ then $U\in\cB_1\sm\cJ$. Consider the case that $|E'(U)\sm C_1|\geq2$. So there are edges $f_1,f_2\in E'(U)\sm C_1$. Then these two edges together with the four edges in $(E(U)\cup E(\hat{U})) \setminus \{e(U)\}$ form two edge-disjoint triangles, say $T_1(U)$ and $T_2(U)$. Hence at least one of $f_1,f_2$, say $f_1$, must belong to $E[\cB]$; otherwise $(\cB\sm\{\hat{U}\})\cup\{T_1(U),T_2(U)\}$ would be an edge-disjoint family of triangles, contradicting the maximality of $\cB$. Since $f_1\notin C_1$, we conclude that $f_1\in E(\hat{T})\sm e(T)$ for some triangle $T\in\cB_1$. If $f_2\notin E(T)\sm\{e(T)\}$, then $(\cB\sm\{\hat{U},\hat{T}\})\cup\{T,T_1(U),T_2(U)\}$ is a family of edge-disjoint triangles, contradicting the maximality of $\cB$. So $f_2\in E(T)\sm\{e(T)\}$. Since $f_1$ and $f_2$ have the same endpoints, it follows that the edge $f_2$ is uniquely determined and that $v(T)=\hat{v}(T)$. This implies that $E'(T)=\emptyset$ and, therefore, $T\in\cJ$. Since we could not get the same conclusion for a third edge in $E'(U)\setminus C_1$, this also implies that  $|E'(U)\sm C_1|=2$.

The above proof shows that, for any triangle $U\in\cB_1$, the set $E'(U)\sm C_1$ contains at most two edges and, therefore, $|C_2\sm C_1|\leq2(\gamma-\gamma_0)\nu$. Moreover, if $E'(U)\sm C_1$ contains two edges, then one of them belongs to $E[\cJ]\sm C_1$. Therefore, we also have
$|C_2\sm C_1|\leq(\gamma-\gamma_0+2\gamma_0)\nu=(\gamma+\gamma_0)\nu$. Consequently, we have
$$
  |C_2\sm C_1|\leq\frac{1}{3}(2\gamma-2\gamma_0)\nu+\frac{2}{3}(\gamma+\gamma_0)\nu=\frac{4}{3}\gamma\nu
$$
and hence $|C|\leq (3-2\gamma)\nu + |C_2\setminus C_1| \leq (3-\frac{2}{3}\gamma)\nu$.
\end{proof}

\medskip

Next, we let $G'=G-E[\cB_1]$. Note that $\nu(G')=(1-\gamma)\nu$, and that all triangles in $G'$ are of type $(\cB,2)$ or of type $(\cB,3)$. Let $\cB_2$ be an independent family of triangles of type $(\cB,2)$ of maximum size in $G'$, and let $\beta$ be such that $|\cB_2|=\beta\nu$.

\begin{lemma}\label{lem:3/2+5/2gamma+2beta}
  $\tau(G)\leq(\frac{3}{2}+\frac{5}{2}\gamma+2\beta)\nu$.
\end{lemma}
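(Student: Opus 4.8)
The plan is to construct an explicit transversal $C$ of $G$ and bound $|C|$, in the spirit of the proof of Lemma~\ref{lem:3-2/3gamma}. The guiding idea is that the three types of triangles can be handled by three largely independent devices: type-$(\cB,3)$ triangles live entirely inside the base-edge subgraph and can be covered cheaply by a cut argument; type-$(\cB,1)$ triangles are controlled through the maximal family $\cB_1$; and type-$(\cB,2)$ triangles through the maximal family $\cB_2$ in $G' = G - E[\cB_1]$.

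First I would dispose of the type-$(\cB,3)$ triangles. Let $H_0 = (V, E[\cB])$ be the subgraph consisting of the $3\nu$ base edges; its triangles are precisely the type-$(\cB,3)$ triangles (in particular $H_0$ contains every triangle of $\cB$). Fixing a maximum edge-cut $\delta(W)$ of $H_0$, every triangle of $H_0$ splits its vertices $3$--$0$ or $2$--$1$ across $W$, hence has at least one edge off the cut; therefore $C_0 := E[\cB] \sm \delta(W)$ is a transversal of all type-$(\cB,3)$ triangles. Since $H_0$ has $3\nu$ edges and every graph has a cut meeting at least half its edges (indeed Lemma~\ref{lem:bigcut} gives more), $|C_0| \le \tfrac12\cdot 3\nu = \tfrac32\nu$, which produces the leading term.

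Next I would add edges to meet the remaining type-$1$ and type-$2$ triangles, exploiting maximality exactly as in Lemma~\ref{lem:3-2/3gamma}. For type-$1$: since $\cB_1$ is a maximum independent family of type-$(\cB,1)$ triangles, every type-$1$ triangle shares an edge with some $T\in\cB_1$; I would therefore keep, for each $T\in\cB_1$, its counterpart $\hat{T}\in\cB$ entirely in $C$ together with a suitable non-base edge of $T$, and a swapping argument against the maximality of $\cB$ (tracking the sets $E'(T)$ as before) should show this costs at most $\tfrac52\gamma\nu$ beyond $C_0$. Keeping $\hat{T}$ entirely also places the base edge $e(T)$ in $C$, which cleanly rescues any type-$2$ triangle of $G$ that was destroyed in passing to $G'$. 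For type-$2$: in $G'$ only type-$2$ and type-$3$ triangles survive and the latter are already met by $C_0$, so the maximality of $\cB_2$ guarantees that every surviving type-$2$ triangle meets some $S\in\cB_2$; adding the non-base edge of each such $S$ together with whichever of its two base edges is missing from $C_0$ should contribute at most $2\beta\nu$ more. Summing the three contributions gives $|C| \le (\tfrac32 + \tfrac52\gamma + 2\beta)\nu$.

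The main obstacle is the coverage bookkeeping for the type-$1$ and type-$2$ triangles, not the leading term. Concretely, when a stray triangle shares with a member of $\cB_1$ or $\cB_2$ either the non-base edge of $T$ that I chose \emph{not} to keep, or a base edge that happens to lie on the cut $\delta(W)$, that edge is absent from $C$ and the triangle must be caught some other way; showing that the associated payments never exceed the advertised $\tfrac52\gamma\nu$ and $2\beta\nu$ is precisely where the maximality and swapping arguments must be deployed with care. The multigraph setting adds the further subtlety that two distinct triangles may share two edges (via parallel edges), which must be tracked when arguing independence and when identifying the shared edge. This delicate case analysis---rather than any single inequality---is where the real work lies, and it is exactly the part imported from~\cite{Hax}.
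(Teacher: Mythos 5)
There is a genuine gap, and it is exactly in the part you defer. Your plan takes the cut on the \emph{full} base-edge graph $H_0=(V,E[\cB])$ first, and then tries to patch in coverage for type-$(\cB,1)$ and type-$(\cB,2)$ triangles within budgets $\frac52\gamma\nu$ and $2\beta\nu$. Both the budget and the coverage fail as stated. For the budget: a cut contains $0$ or $2$ edges of any triangle, so for $S\in\cB_2$ the two edges in the cut can be precisely the two base edges of $S$; then ``whichever of its two base edges is missing from $C_0$'' is not one edge but two, and that $S$ costs $3$ edges beyond $C_0$, not $2$. Similarly each $T\in\cB_1$ can cost $|E(\hat{T})\cap\delta(W)|+1=3$ edges beyond $C_0$. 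In the worst case your construction totals $\bigl(\frac32+3\gamma+3\beta\bigr)\nu$, which exceeds the claimed bound whenever $\gamma>0$ or $\beta>0$. For the coverage: a type-$(\cB,1)$ triangle whose only edge in common with $\cB_1$ is the non-base edge of $T$ that you chose \emph{not} to keep meets none of your designated edges, and the swapping argument of Lemma~\ref{lem:3-2/3gamma} does not rescue it, because that argument presupposes that $E[\cB\sm\hat{\cB}_1]$ and the sets $E'(T)$ lie in the transversal, which your $C$ does not contain (the cut deletes part of $E[\cB\sm\hat{\cB}_1]$). The same hole occurs for a type-$(\cB,2)$ triangle meeting $E[\cB_1]$ only in the unchosen non-base edge of some $T$.

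The missing idea is an order-of-operations switch that makes all of this case analysis unnecessary: put \emph{all} of $E[\cB_1]\cup E[\cB_2]$ into $C$, and only then take the cut, on the reduced graph $H=G[E[\cB]\sm(E[\cB_1]\cup E[\cB_2])]$. Maximality of $\cB_1$ (in $G$) and of $\cB_2$ (in $G'$) immediately gives that every type-$1$ and every type-$2$ triangle meets $E[\cB_1]\cup E[\cB_2]$, so the only uncovered triangles are type-$3$ triangles of $H$; taking $C'=E(H)\sm E(S)$ for a bipartite subgraph $S$ with at least half the edges of $H$ finishes the job. The arithmetic then closes because $H$ has only $(3-\gamma-2\beta)\nu$ edges:
\begin{equation*}
|C|\;\le\; 3\gamma\nu+3\beta\nu+\tfrac12(3-\gamma-2\beta)\nu\;=\;\bigl(\tfrac32+\tfrac52\gamma+2\beta\bigr)\nu .
\end{equation*}
In other words, the coefficients $\frac52\gamma$ and $2\beta$ are not the prices of clever partial selections from $\cB_1$ and $\cB_2$; they are $3\gamma-\frac{\gamma}{2}$ and $3\beta-\beta$, the full cost of the two families offset by the shrinkage of the cut graph. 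Your instinct that the max-cut handles type-$3$ and maximality handles types $1$ and $2$ is correct, but cutting before deleting is what creates the (unresolvable within your budget) interaction between the cut and the patches.
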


\begin{proof}
  Let $H=G[E[\cB]\sm(E[\cB_1]\cup E[\cB_2])]$. Maximality of $\cB_1$ and $\cB_2$ imply that every type $(\cB,1)$ triangle in $G$ contains an edge of $E[\cB_1]$, and every triangle of type $(\cB,2)$ contains an edge of $E[\cB_1]\cup E[\cB_2]$. Consequently, every triangle that is edge-disjoint from $E[\cB_1]\cup E[\cB_2]$, is
  of type $(\cB,3)$ and, therefore, also a triangle in $H$. Hence, if $C'$ is a transversal of $H$, then the set
  $$C = E[\cB_1] \cup E[\cB_2] \cup C'$$
  is a transversal of $G$. Now, we know that $H$ has a bipartite subgraph $S$ with at least $\frac{1}{2}|E(H)|$ edges. Thus, $C' = E(H)\setminus E(S)$ is a transversal of $H$. It follows that $|C'|\leq\frac{1}{2}|E(H)|=\frac{1}{2}(3-\gamma-2\beta)\nu$. Thus, $|C|\leq(\frac{3}{2}+\frac{5}{2}\gamma+2\beta)\nu$ as desired.
\end{proof}

We let $\cB'$ be an independent family of triangles in $G'$ of maximum size, subject to the condition that $|E[\cB']\sm E[\cB]|\geq\beta\nu$. We know that such a family exists, because $\cB_2$ satisfies the condition. Observe that $\cB'$ is an inclusion-maximal independent family of triangles in~$G'$. We define $\alpha$ by $|\cB'|=\alpha\nu$.

From now on, we use the same notation as in Figure~\ref{fig:B1}, but where the set $\cB'$ plays the role that the set $\cB$ was playing before. More precisely, if $T$ is a triangle of type $(\cB',1)$ in $G'$, we let $\hat{T}$ be the triangle in $\cB'$ that shares an edge with $T$, we let $e(T)$ be the edge shared by $T$ and $\hat{T}$,
we let $v(T)$ be the unique vertex of $T$ which is not incident to $e(T)$, we let $\hat{v}(T)$ be the unique vertex of $\hat{T}$ which is not incident to $e(T)$,
and we let $E'(T)$ be the set of edges between $v(T)$ and $\hat{v}(T)$ in $G'$.

We let $\cB'_1$ be an independent family of triangles of type $(\cB',1)$  in $G'$ such that for each $T\in\cB_1'$, we have $e(T)\not\in E[\cB]$, and such that $\cB_1'$ has maximum cardinality with these properties. We define $\delta$ by $|\cB_1'|=\delta \nu$.

\begin{lemma}\label{lem:switch}
Let $\mathcal{S}$ be any subset of $\cB_1'$. Then the family of triangles:
\begin{equation}
\label{eq:switch}
\tilde \cB' = \mathcal{S} \cup \cB' \setminus \set{\hat{T}}{T\in \mathcal{S}} ,
\end{equation}
is an independent family of triangles in $G'$ such that $| E[\tilde \cB']\setminus E[\cB] | \geq \beta \nu$. Moreover, $\tilde\cB'$ has maximum size with this property.
\end{lemma}

\begin{proof}
Let $T\in\mathcal{S}$. By definition of $\cB_1'$, we have $e(T)\not\in E[\cB]$, and since there are no triangles in $G'$ of type $(\cB,1)$, we have $E(T)\cup E(\hat{T})\setminus \{e(T)\} \subset E[\cB]$. This implies that $E[\tilde \cB']\setminus E[\cB] = E[\cB']\setminus E[\cB]$, which proves the first assertion of the lemma. The second assertion is immediate since $|\tilde \cB'|=|\cB'|$.
\end{proof}

\begin{lemma}\label{lem:3+3delta-beta}
$\tau\leq (3\gamma+3\delta+3\alpha-\beta)\nu$.
\end{lemma}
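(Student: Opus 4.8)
The plan is to build an explicit transversal $C$ of $G$ by combining three ingredients: the edges of $\cB_1'$, a transversal built around the family $\cB'$, and a correction term that accounts for triangles of type $(\cB',1)$. The target bound $\tau \le (3\gamma + 3\delta + 3\alpha - \beta)\nu$ suggests that $3\alpha\nu$ comes from covering triangles using $E[\cB']$ (three edges per triangle in the size-$\alpha\nu$ family $\cB'$), the $3\delta\nu$ term comes from handling the type-$(\cB',1)$ triangles captured by $\cB_1'$, and the $3\gamma\nu$ term from $\cB_1$, while the $-\beta\nu$ saving reflects the lower bound $|E[\cB']\setminus E[\cB]| \ge \beta\nu$ that lets us economize on edges that lie in $E[\cB]$ but have already been accounted for elsewhere.

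First I would recall that every triangle in $G'$ is edge-disjoint from $E[\cB_1]$ and is of type $(\cB,2)$ or $(\cB,3)$, and that $\cB'$ is an inclusion-maximal independent family in $G'$. Maximality means every triangle of $G'$ shares an edge with some member of $\cB'$, hence is of type $(\cB',1)$, $(\cB',2)$, or $(\cB',3)$. I would take $E[\cB_1] \cup E[\cB']$ as the backbone of the transversal, which already covers everything except the type-$(\cB',1)$ triangles that meet $\cB'$ in only one edge. For those I would invoke Lemma~\ref{lem:switch}: for $T \in \cB_1'$, a type-$(\cB',1)$ triangle sharing $e(T)$ with $\hat T$ is dealt with by adding either $e(T)$ or the parallel-edge set $E'(T)$, exactly as in the proof of Lemma~\ref{lem:3-2/3gamma}. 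The switching lemma guarantees that replacing any subset $\mathcal S \subseteq \cB_1'$ by the triangles $T$ (dropping the corresponding $\hat T$) keeps us inside the class of maximum independent families meeting the $\beta\nu$ constraint, which is precisely what licenses the covering argument that forces each such dangerous triangle to contain $e(T)$ or an edge of $E'(T)$.

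The key accounting step is to show the resulting edge set has size at most $(3\gamma + 3\delta + 3\alpha - \beta)\nu$. I would start from $E[\cB_1]$ contributing $3\gamma\nu$ and $E[\cB']$ contributing $3\alpha\nu$, then add the $\cB_1'$-correction contributing at most $3\delta\nu$ (three edges $e(T)$ and the at-most-two edges of $E'(T)$ per triangle, absorbed into the $3\delta$ via an argument parallel to the $\gamma$-analysis in Lemma~\ref{lem:3-2/3gamma}). The crucial saving of $-\beta\nu$ should come from the defining property $|E[\cB']\setminus E[\cB]| \ge \beta\nu$: the edges of $\cB'$ lying in $E[\cB]$ are already charged to $\cB$ and need not be double-counted against the transversal, and symmetrically the at-least-$\beta\nu$ edges of $E[\cB']$ outside $E[\cB]$ let us discard $\beta\nu$ edges from the naive count. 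Making this bookkeeping precise — identifying exactly which edges are shared between $E[\cB_1]$, $E[\cB']$, the correction edges, and $E[\cB]$, and verifying no edge is charged below its true cost — is where I expect the real work to lie.

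The main obstacle will be controlling the overlaps so that the $-\beta$ term genuinely materializes rather than being swamped by triangles that are simultaneously of type $(\cB',1)$ and whose $E'(T)$ edges wander into already-counted sets. I would handle this the way the $\gamma$-analysis does: use maximality of $\cB$ and of $\cB'$ to show that if $E'(T)\setminus C_1$ had too many edges one could enlarge an independent family, a contradiction; and use Lemma~\ref{lem:switch} to guarantee that performing the switch on $\cB_1'$ does not violate the $\beta\nu$ lower bound, so the forcing argument remains valid after the exchange. The delicate part is ensuring that the edges counted in the $3\delta\nu$ correction and the edges saved by the $\beta\nu$ surplus are disjoint contributions, so that the inequalities compose additively into the stated bound.
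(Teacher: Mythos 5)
Your proposal has a genuine gap: you never construct a set of size $(3\gamma+3\delta+3\alpha-\beta)\nu$ that you can prove is a transversal. Your backbone is $E[\cB_1]\cup E[\cB']$, which already costs up to $(3\gamma+3\alpha)\nu$, and your corrections for type-$(\cB',1)$ triangles only add edges; so the $-\beta\nu$ term must come from \emph{deleting} at least $\beta\nu$ edges of $E[\cB']$ from the transversal, and you must then re-prove coverage without them. Your stated justification for the saving --- that edges of $E[\cB']$ lying in $E[\cB]$ are ``already charged to $\cB$'' --- is not valid, because $E[\cB]$ is not part of the transversal (only $E[\cB_1]$ is), so nothing is ``already'' covering those triangles. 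The correct construction keeps exactly the opposite set from the one your bookkeeping suggests discarding: the paper takes
\[
C= E[\cB_1]\cup E[\cB_1'] \cup \bigl(E[\cB]\cap E[\cB']\bigr),
\]
i.e.\ it throws away \emph{all} of $E[\cB']\sm E[\cB]$, whose size is at least $\beta\nu$ by the defining property of $\cB'$; this immediately gives $|C|\leq(3\gamma+3\delta+3\alpha-\beta)\nu$.

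The two coverage facts that make this work are both missing from your argument. First, for a triangle $T$ of type $(\cB',1)$ in $G'$: if $e(T)\in E[\cB]$ then $e(T)\in E[\cB]\cap E[\cB']\subseteq C$; if $e(T)\notin E[\cB]$ then $T$ is eligible for membership in $\cB_1'$, so by the \emph{maximality of $\cB_1'$} it must share an edge with $E[\cB_1']\subseteq C$. This is the whole role of the $3\delta\nu$ term --- no $e(T)$-versus-$E'(T)$ dichotomy, no switching, and no bound on $|E'(T)|$ is needed (indeed Lemma~\ref{lem:switch} is not used in this lemma at all; it belongs to Lemmas~\ref{lem:lastbut} and~\ref{lem:last}). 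Second, for a triangle $T$ of type $(\cB',2)$ or $(\cB',3)$ in $G'$: it has at least two edges in $E[\cB']$, and since every triangle of $G'$ is of type $(\cB,2)$ or $(\cB,3)$ it also has at least two edges in $E[\cB]$; as $T$ has only three edges, pigeonhole forces an edge in $E[\cB]\cap E[\cB']\subseteq C$. This pigeonhole step is precisely what licenses discarding $E[\cB']\sm E[\cB]$, and it is the idea your proposal lacks; without it the ``delicate bookkeeping'' you defer to cannot close.
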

\begin{proof}
We let
\[
C= E[\cB_1]\cup E[\cB_1'] \cup \big(E[\cB]\cap E[\cB']\big).
\]
Then clearly $|C|\leq (3\gamma + 3\delta + 3\alpha- \beta)\nu$. To complete the proof, it suffices to show
that $C$ is a transversal of $G$. Since $E[\cB_1] \subseteq C$, it suffices to prove that every triangle in $G'$ has an edge in $C$.

First, let $T$ be a triangle of type $(\cB',1)$ in $G'$. If $e(T)\in E[\cB]$, then $e(T) \in E[\cB]\cap E[\cB']$, so $E[T]$ intersects $C$. Else, we know that $E[T]$ intersects $E[\cB_1']$ since otherwise adding $T$ to $\cB_1'$ would contradict the maximality of $\cB_1'$; so every triangle of type $(\cB',1)$ in $G'$ intersects $C$.

Now, let $T$ be a triangle of type $(\cB',2)$ or $(\cB',3)$ in $G'$. Since in $G'$ all triangles have type $(\cB,2)$ or $(\cB,3)$, $E[T]$ necessarily contains at least one edge in $E[\cB]\cap E[\cB']$. Therefore $E[T]$ intersects $C$, which concludes the proof that $C$ is a transversal of~$G$.
\end{proof}

In $G'$, we define the set of triangles $\hat{\cB}_1'=\set{\hat{T}}{T \in \cB_1'}\subseteq\cB'$ and we consider the edge-set $E_0$ defined by
\begin{equation}
  E_0 = E[\cB'\setminus\hat{\cB}_1'] \cup \textstyle\bigcup\limits_{T \in \cB_1'} \{e(T)\}.
  \label{eq:s0}
\end{equation}
We are now going to define a subset $\cI$ of $\cB_1'$, and a function $f\colon \cI \to 2^{E[G']}$ that associates to each triangle in $\cI$ a set of edges of $G'$. The set $\cI$ and the function $f$ are chosen simultaneously according to the following properties:

-- for each $T\in \cI$, we have $f(T) \subseteq E'(T)\setminus E[\cB']$ and $|f(T)|=2$;

-- the sets $(f(T))_{T\in\cI}$ are pairwise disjoint, and so are the sets $(E(T))_{T\in\cI}$;

-- for any $T,U\in\cI$, the sets $f(T)$ and $E(U)$ are disjoint;

-- $\cI$ has maximum cardinality subject to these properties.

\smallskip

By Lemma~\ref{lem:switch}, any set $\tilde\cB'$ of the form of Eq.~\eqref{eq:switch} satisfies the hypotheses of the definition of $\cB'$. Replacing $\cB'$ by $\tilde \cB'$ may change the cardinality of the set $\cI$ defined above. From now on we will assume that, among all sets $\tilde\cB'$ of the form of Eq.~\eqref{eq:switch}, the set $\cB'$ is the one for which the set $\cI$ has the maximum cardinality. We let $\hat{\cI}=\set{\hat{T}}{T\in\cI}$ and we define $\eta$ by $|\cI|=|\hat{\cI}|=\eta\nu$.

\begin{lemma}\label{lem:alpha+eta}
$\alpha+\eta \leq 1-\gamma$.
\end{lemma}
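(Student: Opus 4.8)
The plan is to show $\alpha + \eta \le 1 - \gamma$ by exhibiting an independent (edge-disjoint) family of triangles in the original graph $G$ whose size is $(\alpha + \eta + \gamma)\nu$, and then invoking the fact that $\nu(G) = \nu$ bounds the size of any such family. Recall that $\mathcal{B}_1$ is an independent family of type-$(\mathcal{B},1)$ triangles in $G$ with $|\mathcal{B}_1| = \gamma\nu$, living in $E[\mathcal{B}_1] \subseteq E(G)$, while $\mathcal{B}'$ and its relatives all live in $G' = G - E[\mathcal{B}_1]$, so their edges are automatically disjoint from $E[\mathcal{B}_1]$. Thus any independent family I build inside $G'$ can be combined with $\mathcal{B}_1$ with no interference, and the total size adds. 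This reduces the problem to producing an independent family of size $(\alpha+\eta)\nu$ inside $G'$ that is also edge-disjoint from $E[\mathcal{B}_1]$ — but the latter is free since we work in $G'$.

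**The key construction** would start from $\mathcal{B}'$ (size $\alpha\nu$) and ``swap in'' the triangles of $\mathcal{I}$ to pick up extra edge-disjoint triangles from the witnessing sets $f(T)$. Concretely, for each $T \in \mathcal{I}$ the two edges of $f(T) \subseteq E'(T) \setminus E[\mathcal{B}']$ together with the four edges of $(E(T) \cup E(\hat T)) \setminus \{e(T)\}$ split into two edge-disjoint triangles, call them $T_1(U), T_2(U)$ as in the proof of Lemma~\ref{lem:3-2/3gamma}. I would form the family
\begin{equation*}
\mathcal{F} = \bigl(\mathcal{B}' \setminus \hat{\mathcal{I}}\bigr) \cup \{T : T \in \mathcal{I}\} \cup \{\text{one additional triangle built from each } f(T)\},
\end{equation*}
using the defining properties of $\mathcal{I}$ — that the sets $f(T)$ are pairwise disjoint, the sets $E(T)$ are pairwise disjoint, and $f(T)$ is disjoint from every $E(U)$ — to guarantee edge-disjointness. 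Counting: removing $\hat{\mathcal{I}}$ costs $\eta\nu$ triangles but each $T \in \mathcal{I}$ contributes itself plus its companion triangle, a net gain of $\eta\nu$, yielding $|\mathcal{F}| = (\alpha + \eta)\nu$ as an independent family in $G'$.

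**The combination step** then sets $\mathcal{F} \cup \mathcal{B}_1$. Because $\mathcal{F} \subseteq E(G')$ and $E(G') \cap E[\mathcal{B}_1] = \emptyset$, this union is edge-disjoint, so it is an independent family of triangles in $G$ of size $(\alpha + \eta + \gamma)\nu$. Since $\nu = \nu(G)$ is the maximum size of such a family, we get $(\alpha + \eta + \gamma)\nu \le \nu$, i.e. $\alpha + \eta \le 1 - \gamma$.

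**The main obstacle** I anticipate is verifying genuine edge-disjointness of the enlarged family $\mathcal{F}$, and in particular confirming that the companion triangles built from the $f(T)$ do not collide with one another, with the $E(T)$'s, or with the surviving part $\mathcal{B}' \setminus \hat{\mathcal{I}}$ — this is exactly what the four bulleted properties defining $(\mathcal{I}, f)$ are engineered to supply, and also where the stronger maximality hypothesis on $\mathcal{B}'$ (chosen to maximize $|\mathcal{I}|$ among all swaps $\tilde{\mathcal{B}}'$) will likely be needed to rule out the possibility that some $f(T)$-edge secretly lies in $E[\mathcal{B}' \setminus \hat{\mathcal{I}}]$. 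Sorting out precisely which triangle to extract from each $f(T)$ — and checking the two halves $T_1(U), T_2(U)$ interact correctly with the retained $\hat T$-free structure — is the delicate bookkeeping that the formal proof must carry out carefully.
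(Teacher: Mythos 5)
Your overall skeleton matches the paper's proof: produce an edge-disjoint family of triangles of size $(\alpha+\eta)\nu$ inside $G'$, then conclude---either by adjoining $\mathcal{B}_1$ and using $\nu(G)=\nu$, as you do, or equivalently by citing $\nu(G')=(1-\gamma)\nu$, as the paper does (these are the same argument). However, your key construction has a genuine error: the family
\[
\mathcal{F} \;=\; \bigl(\mathcal{B}' \setminus \hat{\mathcal{I}}\bigr) \,\cup\, \{T : T\in\mathcal{I}\} \,\cup\, \{\text{one companion triangle per } f(T)\}
\]
cannot be edge-disjoint, for \emph{any} choice of companions, because it keeps $T$ itself. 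Write $e(T)=xy$, $v=v(T)$, $\hat{v}=\hat{v}(T)$, and $f(T)=\{f_1,f_2\}$, where $f_1,f_2$ join $v$ to $\hat{v}$. A triangle through $f_i$ needs a third vertex adjacent to both $v$ and $\hat{v}$; among the vertices of the configuration this must be $x$ or $y$, so the companion is $\{xv,\,x\hat{v},\,f_i\}$ or $\{yv,\,y\hat{v},\,f_i\}$---and each of these uses an edge of $T$ (namely $xv$ or $yv$). Allowing a third vertex outside the configuration does not help: nothing guarantees such a vertex exists, and the four defining properties of $(\mathcal{I},f)$ say nothing about edges incident to it, so the edge-disjointness you appeal to is not supplied. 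In short, the claim implicit in ``each $T\in\mathcal{I}$ contributes itself plus its companion'' is false: within the six edges $E(T)\cup E(\hat{T})\cup f(T)$, no two edge-disjoint triangles can be chosen if one of them is $T$ (or $\hat{T}$).

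The fix is exactly the paper's construction. For each $T\in\mathcal{I}$, discard \emph{both} $T$ and $\hat{T}$ and insert the two ``crossing'' triangles
\[
T_1(T)=\{xv,\,x\hat{v},\,f_1\}, \qquad T_2(T)=\{yv,\,y\hat{v},\,f_2\},
\]
which between them use both edges of $f(T)$ and all four edges of $(E(T)\cup E(\hat{T}))\setminus\{e(T)\}$. The paper's family is $\mathcal{A}=\{T_1(T),T_2(T) : T\in\mathcal{I}\}\cup(\mathcal{B}'\setminus\hat{\mathcal{I}})$; it has the same count you computed, $(\alpha-\eta+2\eta)\nu=(\alpha+\eta)\nu$, and its edge-disjointness genuinely does follow from the properties of $\mathcal{I}$ (pairwise disjointness of the $f(T)$'s and $E(T)$'s, and $f(T)\cap E(U)=\emptyset$) together with the edge-disjointness of $\mathcal{B}'$ and the fact that each $T\in\mathcal{I}$ is of type $(\mathcal{B}',1)$, meeting $E[\mathcal{B}']$ only in $e(T)\in E(\hat{T})$. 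So your arithmetic and your endgame are correct, but the family as you wrote it is not an independent family, and repairing it forces you into the paper's choice of triangles.
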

\begin{proof}

For $T\in \cI$, the two edges of $f(T)$, together with the four edges in $(E(T)\cup E(\hat{T})) \setminus \{e(T)\}$ form two edge-disjoint triangles, say $T_1(T)$ and $T_2(T)$. Now, by definition of $\cI$, the family of triangles
$$
\cA= \set{T_1(T),T_2(T)}{T\in\cI} \cup \big( \cB'\setminus\hat{\cI}  \big)
$$
is an edge-disjoint family of triangles in $G'$.
Since $|\cA|=|\cB'|+|\hat{\cI}|=(\alpha+\eta)\nu$, we have
$
(\alpha+\eta)\nu \leq \nu(G') = (1-\gamma) \nu.
$
\end{proof}

Now, we let
$\cK=\set{T\in\cB_1'}{E'(T)\subseteq E_0}$, we let
$\hat{\cK}=\set{\hat{T}\in\cB'}{T\in\cK}$, and we define $\delta_0$ by $|\cK|=|\hat{\cK}|=\delta_0\nu$.

\begin{lemma}\label{lem:lastbut}
$\tau \leq (3\gamma+3\alpha-2\delta_0)\nu \leq (3-3\eta-2\delta_0)\nu$.
\end{lemma}

\begin{proof}
We define the set of edges
$$
C= E[\cB_1] \cup E[\cB'\setminus \hat{\cK}] \cup \set{e(T)}{T\in\cK},
$$
and we observe that $|C|\leq (3\gamma+3\alpha-2\delta_0)\nu\leq (3-3\eta-2\delta_0)\nu$ (for the second inequality we have used Lemma~\ref{lem:alpha+eta}).

Now we prove that $C$ is a transversal of $G$. Since $E[\cB_1]\subseteq C$, it suffices to prove that $C$ is a transversal in $G'$.
By Lemma~\ref{lem:switch}, every family of triangles of the form $(\cB'\sm \hat{\cK})\cup\set{T\text{ or }\hat{T}}{T\in K}$ is an inclusion-maximal independent family of triangles in $G'$. Therefore for every triangle $U$ in $G'$ that is edge-disjoint from $\cB'\sm \hat{\cK}$, there exists $T\in\cK$ such that $U$ shares an edge with $T$ as well as with its counterpart $\hat{T}\in\hat{\cK}$. Then the triangle $U$ contains either $e(T)$ or an edge from $E'(T)$. In the first case, $E(U)$ intersects $C$; in the second case, since by the definition of~$\cK$, $E'(T)$ is contained in $E_0\subseteq C$,  $E(U)$ intersects $C$ as well.
\end{proof}

\begin{lemma}\label{lem:last}
$\tau \leq (3-\delta+4\eta+\delta_0)\nu$.
\end{lemma}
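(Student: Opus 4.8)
The plan is to exhibit an explicit transversal $C$ of $G$ of size at most $(3-\delta+4\eta+\delta_0)\nu$, following the same template as Lemmas~\ref{lem:3+3delta-beta} and~\ref{lem:lastbut}: write down a concrete edge set, bound its cardinality by routine counting, and then verify that it meets every triangle using inclusion-maximality of $\cB'$ together with the switching Lemma~\ref{lem:switch}.

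First observe that since $\cB$ is a maximum (hence inclusion-maximal) independent family, $E[\cB]$ is itself a transversal of $G$ of size $3\nu$. I would take this as the baseline and \emph{save} one edge for each $T\in\cB'_1$. For such a $T$, the book $T\cup\hat T$ has its shared edge $e(T)\notin E[\cB]$ by the definition of $\cB'_1$, and since every triangle of $G'$ is of type $(\cB,2)$ or $(\cB,3)$, the triangle $\hat T\in\cB'$ is forced to be of type $(\cB,2)$; hence both of its edges other than $e(T)$ lie in $E[\cB]$. Pick $d(T)$ to be one of these two edges. As the triangles $\{\hat T:T\in\cB'_1\}$ are pairwise edge-disjoint (they lie in the independent family $\cB'$), the edges $d(T)$ are distinct, so deleting all of them removes exactly $\delta\nu$ edges and leaves the set $E[\cB]\setminus\{d(T):T\in\cB'_1\}$ of size $(3-\delta)\nu$.

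The remaining work is to restore coverage. By Lemma~\ref{lem:switch}, for every $\mathcal{S}\subseteq\cB'_1$ the switched family $\mathcal{S}\cup(\cB'\setminus\{\hat T:T\in\mathcal{S}\})$ is again inclusion-maximal independent in $G'$; hence any triangle $U$ that becomes uncovered after the deletions must share edges with both $T$ and $\hat T$ for the relevant $T$, and must therefore contain $e(T)$ or an edge of $E'(T)$. I would split into three cases according to where $T$ sits. For $T\in\cB'_1\setminus(\cI\cup\cK)$ I expect to argue that no triangle is actually exposed, and this is precisely where the maximality of $\cI$ and the global choice of $\cB'$ maximizing $|\cI|$ must enter: an exposed triangle would supply two fresh edges of $E'(T)\setminus E[\cB']$ satisfying the disjointness conditions of the definition of $f$, allowing $\cI$ to be enlarged, a contradiction. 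For $T\in\cK$ the uncovered triangles route through $E'(T)\subseteq E_0$, so re-adding the single edge $d(T)$ repairs them at a cost of one edge per element of $\cK$, i.e.\ $\delta_0\nu$. For $T\in\cI$ the exposures are confined to triangles meeting $f(T)$ and the split triangles $T_1(T),T_2(T)$, which can be killed by re-adding at most four edges per element of $\cI$, i.e.\ $4\eta\nu$. Collecting the baseline, the deletions and these repairs yields $|C|\le(3-\delta+4\eta+\delta_0)\nu$.

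The main obstacle is the coverage verification for $T\notin\cI\cup\cK$: one must show that deleting $d(T)$ exposes no triangle of $G'$, and the only leverage is the maximality built into the definitions of $\cI$ and $\cK$ combined with Lemma~\ref{lem:switch}. Concretely, if a triangle $U$ were exposed it would have to use $e(T)$ or a parallel edge of $E'(T)$ lying outside $E[\cB']$; tracking which of these occurs should force either $E'(T)\subseteq E_0$ (so $T\in\cK$) or the existence of two disjoint usable edges of $E'(T)\setminus E[\cB']$ that could be adjoined to the configuration defining $\cI$ (so $T\in\cI$), contradicting $T\notin\cI\cup\cK$. Making this dichotomy airtight — in particular ruling out interference between the repairs chosen for different $T$ and confirming the disjointness requirements in the definition of $f$ — is the delicate part; the cardinality bound itself is then immediate.
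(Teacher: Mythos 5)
Your template (explicit transversal, cardinality count, switching via Lemma~\ref{lem:switch}, maximality of $\cI$) is the paper's, but your concrete construction has a gap that none of the available machinery can close. Your set is $E[\cB]$ minus the edges $d(T)$, $T\in\cB_1'$, plus repairs that live entirely inside $G'$. However, a transversal must also meet the triangles of $G$ that are \emph{not} triangles of $G'$, namely those meeting $E[\cB_1]$. Consider a type-$(\cB,1)$ triangle $U$ whose unique edge in $E[\cB]$ is a deleted edge $d(T)$: maximality of $\cB_1$ only guarantees that $U$ contains an edge of $E[\cB_1]$, and your set contains no edge of $E[\cB_1]$, so $U$ is left uncovered. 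Lemma~\ref{lem:switch} and the maximality properties of $\cB'$, $\cI$, $\cK$ all concern triangles of $G'$ only, so they cannot rescue $U$. This is exactly why the paper's transversal explicitly contains $E[\cB_1]$ (cost $3\gamma\nu$): it can afford this because its cover of the $G'$-triangles is built around $E[\cB']$ (the set $E_0$, cost roughly $3\alpha\nu$) rather than around $E[\cB]$ (cost $3\nu$), and the arithmetic closes only through Lemma~\ref{lem:alpha+eta}, which gives $3\gamma+3\alpha\le(3-3\eta)$. That your argument never invokes Lemma~\ref{lem:alpha+eta} is the tell-tale sign: a $3\nu$ baseline leaves no room for the additional $3\gamma\nu$ that covering $G\sm G'$ requires.

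Second, your claim that for $T\in\cB_1'\sm(\cI\cup\cK)$ the deletion of $d(T)$ exposes no triangle is stronger than what maximality of $\cI$ can deliver, and your budget has no term to pay when it fails. Maximality of $\cI$ only controls \emph{pairs of parallel edges} in $E'(U)\sm E[\cB']$; it says nothing about, e.g., a type-$(\cB,2)$ triangle of $G'$ consisting of $d(T)$, a second deleted edge $d(T'')$, and a third edge outside $E[\cB]$ --- precisely the interference you flag and do not resolve. What the paper actually proves is weaker: after additionally setting aside the class $\cI'=\set{T\in\cB_1'\sm\cI}{E(T)\cap\bigcup_{T'\in\cI}f(T')\neq\emptyset}$ (of size $\eta'\nu\le 2\eta\nu$), for each $U\in\cA=\cB_1'\sm(\cI\cup\cI'\cup\cK)$ at most \emph{one} edge of $E'(U)$ remains uncovered, and that edge is simply placed into the transversal (the term $E_4$, costing $|\cA|=(\delta-\eta-\eta'-\delta_0)\nu$ extra edges). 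The exclusion of $\cI'$ is itself essential: the switching step that moves the two offending edges outside $E[\cB']$ requires $E(T)\cup E(\hat{T})$ to be disjoint from $\bigcup_{T'\in\cI}f(T')$, or else replacing $\hat{T}$ by $T$ invalidates the choice of $\cI$ and $f$. Your decomposition has no analogue of $\cI'$ and no per-triangle allowance for $\cA$, so even restricted to triangles of $G'$ the accounting does not close.
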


\begin{proof}
First we define a subset $\cI'$ of $\cB_1'\setminus \cI$ by
$$\cI'=\set{T'\in\cB_1'\sm(\cI\cup\cK)}{E(T')\cap(\textstyle\bigcup\limits_{T\in\cI}f(T)) \neq \emptyset}.$$
We define $\eta'$ by $|\cI'|=\eta'\nu$. Since $\cI'$ is a family of edge-disjoint triangles, it follows that $|\cI'| \le |\bigcup_{T\in\cI}f(T)| = 2|\cI|$, hence $\eta'\leq2\eta$.
Let $\cA=\cB_1'\sm(\cI\cup\cI'\cup\cK)$ and $\hat{\cA}=\set{\hat{T}}{T\in\cA} \subseteq \cB'$.
We now recall the definition \eqref{eq:s0} of the edge-set $E_0$ and define the following set of edges of $G$:
$$
C=E[\cB_1]\cup E_0 \cup E_1 \cup E_2 \cup E_3 \cup E_4,
$$
where
\begin{align*}
  E_1 & = \textstyle\bigcup\limits_{T\in\cI} \big( E(T)\cup E(\hat{T})\cup f(T) \big),\\
  E_2 & = \textstyle\bigcup\limits_{T\in\cI'} E(\hat{T}),\\
  E_3 & = \textstyle\bigcup\limits_{T\in\cK} E(\hat{T}),\\
  E_4 & = \textstyle\bigcup\limits_{T\in\cA} E'(T).
\end{align*}

We claim that $C$ is a transversal of $G$.
Since $E[\cB_1]\subseteq C$ and $E[\cB'\sm\hat{\cA}] \subseteq E_0 \cup E_1 \cup E_2 \cup E_3 \subseteq C$, we only have to consider triangles in $G'$ which are edge-disjoint from $\cB'\sm\hat{\cA}$.
By Lemma~\ref{lem:switch}, every family of triangles of the form $(\cB'\sm\hat{\cA})\cup\set{T\text{ or }\hat{T}}{T\in\cA}$ is an inclusion-maximal independent family of triangles in $G'$. Therefore, for every triangle $U$ in $G'$ that is edge-disjoint from $\cB'\sm\hat{\cA}$, there exists $T\in\cA$ such that $U$ shares an edge with $T$ as well as with its counterpart $\hat{T}\in\hat{\cA}$. Then the triangle $U$ contains either $e(T)$ or an edge from $E'(T)$. In the first case, $E(U)$ intersects $E_0$; in the second case, $E(U)$ intersects $E_4$. Hence $C$ is a transversal of $G$.

It remains to show that $|C|\leq(3-\delta+4\eta+\delta_0)\nu$. Clearly,
\begin{align*}
  |C| & \leq |E[\cB_1]| + |E_0| + |E_1\sm E_0| + |E_2\sm E_0| + |E_3\sm E_0|\\
      & \phantom{\leq~} + |E_4\sm(E_0\cup E_1\cup E_2\cup E_3)|\\
      & \leq \big( 3\gamma +3\alpha -2\delta +6\eta +2\eta' +2\delta_0\big) \nu + |E_4\sm(E_0\cup E_1\cup E_2\cup E_3)|.
\end{align*}

Let us consider a triangle $U\in\cA$. We claim that $|E'(U)\sm(E_0\cup E_1\cup E_2\cup E_3)|\leq1$.
Assume to the contrary that there are two distinct edges $f_1,f_2\in E'(U)\sm(E_0\cup E_1\cup E_2\cup E_3)$. If $f_1\in E[\cB']$ then, since $f_1 \notin E_0\cup E_1\cup E_2\cup E_3$, we have $f_1\in E(\hat{T})$ for a triangle $T\in\cA$.
Since $T\notin\cI'$, the set $E(T)\cup E(\hat{T})$ is disjoint from $\set{f(T')}{T'\in\cI}$. Hence the set $\tilde\cB'=(\cB'\cup\{T\})\sm\{\hat{T}\}$ not only satisfies the hypothesis of the definition of $\cB'$, but the sets $\cI$, $\cI'$, $\cK$, $E_0\ldots E_4$, $C$ satisfy the hypotheses of their definition also with respect to $\tilde\cB'$ instead of $\cB'$. Moreover, since $T\notin\cK$, we have $v(T)\neq\hat{v}(T)$ and, therefore, $f_2\notin E(T)\cup E(\hat{T})$. Hence $f_2$ belongs to $\tilde\cB'$ exactly if it belongs to $\cB'$. So we can work with $\tilde\cB'$ instead of $\cB'$ without affecting the involved edge sets or the status of $f_2$. Since $f_1\notin\tilde\cB'$, we may assume in the first place that $f_1\notin\cB'$ and, by a similar argument, that also $f_2\notin\cB'$. However, then we can define $f(U)=\{f_1,f_2\}$ and the set $I\cup\{U\}$ satisfies the hypotheses of the definition of $\cI$, contradicting the maximality of $|\cI|$. This proves the claim.

Consequently, we have $|E_4\sm(E_0\cup E_1\cup E_2\cup E_3)|\leq|\cA|=(\delta-\eta-\eta'-\delta_0)\nu$. Note that this last equality holds because the sets $\cI$, $\cK$, $\cI'$ and $\cA$ are disjoint by definition. Hence we have $|C|\leq (3\gamma +3\alpha -\delta +5\eta +\eta' +\delta_0) \nu$. Using Lemma~\ref{lem:alpha+eta} and the fact that $\eta'\leq2\eta$, we obtain $|C|\leq(3-\delta+4\eta+\delta_0)\nu$.
\end{proof}

\begin{proof}[Proof of Theorem~\ref{thm:tauvsnu}]
Combining inequalities in Lemmas~\ref{lem:3-2/3gamma}--\ref{lem:last}, we have:
\begin{align*}
\left(\tfrac{1}{5}+\tfrac{4}{75}+\tfrac{8}{75}+\tfrac{8}{25}+\tfrac{8}{25}\right) \tau
  & \leq
    \tfrac{1}{5}\left(3-\tfrac{2}{3}\gamma\right)\nu
    +\tfrac{4}{75} \left(\tfrac{3}{2}+\tfrac{5}{2}\gamma+2\beta\right)\nu\\
  & \phantom{\leq~}
    +\tfrac{8}{75} \left(3\gamma+3\delta+3\alpha-\beta\right) \nu
    +\tfrac{8}{25} \left(3\gamma+3\alpha-2\delta_0\right) \nu\\
  & \phantom{\leq~}
    +\tfrac{8}{25}\left(3-\delta+4\eta+\delta_0\right)\nu,
\end{align*}
which gives $\tau\leq \left(\frac{41}{25}+\frac{32}{25}(\gamma+\alpha+\eta)-\frac{8}{25}\delta_0\right) \nu \leq \frac{73}{25} \nu = \left(3-\frac{2}{25}\right)\nu$.
\end{proof}

\section{Integrality}

\label{sec:integrality}


The goal of this section is to establish Theorem \ref{thm:integrality}, the proof of which relies on the family of graphs $\{G_k\}_{k \in {\mathbb N}}$ defined below.
Each graph $G_k$ has two distinguished vertices called \DEF{terminals} which are joined by an edge called the \DEF{terminal} edge. The graph $G_0$ consists of a single edge which is its terminal edge.  For $k \ge 1$ the graph $G_k$ is constructed as follows (see Figure~\ref{fig:gk} below). Start with the 5-wheel graph $W_5$ consisting of a 5-cycle with vertices $v_1,v_2,\ldots,v_5$ and an additional vertex $u$ joined to $v_1,v_2,\dots,v_5$. To obtain $G_k$, take a copy of $G_{k-1}$ for each edge $xy$ of $W_5$ and identify the terminal edge of this copy with $xy$.
We define $v_1,v_2$ to be the terminal vertices of $G_k$ (so the edge $v_1 v_2$ is the terminal edge).

\begin{figure}[htbp]
  \centering
  \includegraphics[height=4cm]{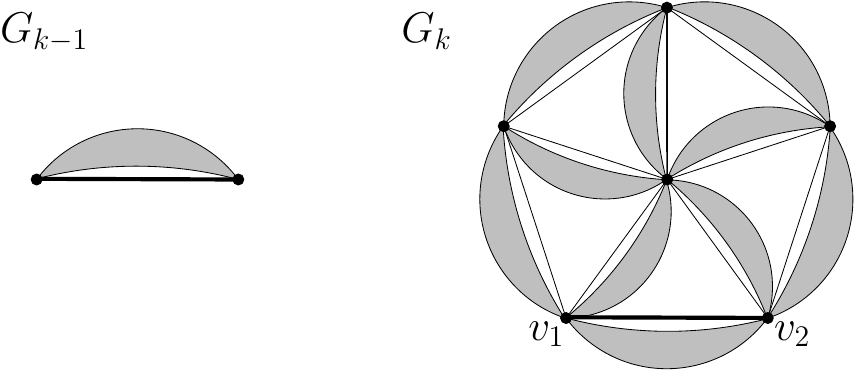}
  \caption{Recursive construction of graphs $G_k$.}
  \label{fig:gk}
\end{figure}

\begin{lemma}
\label{lem:tau*(G_k)}
For every $k \in {\mathbb N}$, we have
\[ \tau^*(G_k) = \nu^*(G_k) = \frac{5}{2^k} \left( \frac{20^k - 1}{19} \right). \]
\end{lemma}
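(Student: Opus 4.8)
The plan is to induct on $k$, exploiting the self-similar structure of $G_k$. Label the rim edges of the defining wheel $W_5$ by $r_1,\dots,r_5$ (with $r_i=v_iv_{i+1}$) and the spokes by $p_1,\dots,p_5$ (with $p_i=uv_i$); in $G_k$ each of these ten edges is a copy of $G_{k-1}$ whose terminal edge genuinely joins the two attachment vertices. A routine induction shows that the only edge of $G_k$ between its two terminals is its terminal edge (so $G_k$ is simple), and, more importantly, that every triangle of $G_k$ is either \emph{internal} to one of the ten gadget copies, or is one of the five \emph{outer slices} $uv_iv_{i+1}$, whose three sides are the terminal edges of the copies sitting on $r_i,p_i,p_{i+1}$. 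This holds because distinct copies meet only in attachment vertices, so no triangle can use interior vertices of two copies, and the only adjacencies among $u,v_1,\dots,v_5$ are the wheel adjacencies.

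The key device is a scalar \emph{tradeoff function}. For a gadget $H$ with terminal edge $ab$, let $C_H(s)$ denote the minimum of $\sum_e y(e)$ over all $y\ge 0$ covering every triangle internal to $H$ and satisfying $y(ab)=s$, and write $C_k:=C_{G_k}$; note $C_0(s)=s$ since $G_0$ has no triangle. Because distinct copies in $G_k$ share only their terminal values while the five slice constraints involve only terminal edges, partial minimization of the fractional-covering LP yields the exact identities
\[ \tau^*(G_k)=\min_{s\ge 0}C_k(s), \qquad C_k(s)=\min\Bigl\{\,\textstyle\sum_{j} C_{k-1}(t_j)\ :\ t_{r_i}+t_{p_i}+t_{p_{i+1}}\ge 1\ \ (1\le i\le 5),\ \ t_{r_1}=s\,\Bigr\}, \]
the inner minimum ranging over the terminal values $t_{r_1},\dots,t_{r_5},t_{p_1},\dots,t_{p_5}\ge 0$ of the ten copies (here $t_{r_1}$ is the terminal edge of $G_k$ itself). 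We may assume every edge value is at most $1$, since any triangle through an edge of value $1$ is already covered; hence all $t_j\in[0,1]$.

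The heart of the proof is the claim that, on $[0,1]$, the function $C_k$ is \emph{affine}, $C_k(s)=A_k+B_ks$ with $B_k=2^{-k}>0$; I would prove this by induction. Assuming $C_{k-1}(t)=A_{k-1}+B_{k-1}t$ with $B_{k-1}>0$, the inner objective becomes $10A_{k-1}+B_{k-1}\sum_j t_j$, so minimizing it amounts to minimizing the total terminal weight $\sum_j t_j$ subject to the five slice inequalities and $t_{r_1}=s$. This is a single fixed LP on $W_5$, and I would show its value is $M(s)=(5+s)/2$ by exhibiting the feasible point $t_{r_1}=s$, $t_{r_2}=\cdots=t_{r_5}=0$, $(t_{p_1},\dots,t_{p_5})=\bigl(\tfrac{1-s}{2},\tfrac{1-s}{2},\tfrac{1+s}{2},\tfrac{1-s}{2},\tfrac{1+s}{2}\bigr)$ of weight $(5+s)/2$, together with the dual solution placing weight $\tfrac12$ on each slice inequality, which certifies the matching lower bound. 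Consequently $C_k(s)=10A_{k-1}+B_{k-1}\tfrac{5+s}{2}$, establishing affineness with the recursions $B_k=\tfrac12 B_{k-1}$ and $A_k=10A_{k-1}+\tfrac52 B_{k-1}$ (and the optimizing $t_j$ above indeed lie in $[0,1]$, so the inductive formula is applied only at legal points).

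Finally, since $B_k>0$ we get $\tau^*(G_k)=\min_sC_k(s)=C_k(0)=A_k$, where $A_0=0$ and $A_k=10A_{k-1}+5\cdot 2^{-k}$. The substitution $a_k:=2^kA_k$ turns this into $a_k=20a_{k-1}+5$ with $a_0=0$, whence $a_k=5\,\frac{20^k-1}{19}$ and therefore $A_k=\frac{5}{2^k}\cdot\frac{20^k-1}{19}$, the claimed value. Since $\nu^*=\tau^*$ by strong LP duality (the equality already recorded in the chain of inequalities in the introduction), this finishes the proof. The one genuinely delicate point is the affineness claim: the entire argument works only because the tradeoff function stays linear as $k$ grows, which is exactly what collapses the ten-variable gadget minimization to the single scalar LP $M(s)$; pinning down $M(s)=(5+s)/2$ with matching primal and dual certificates, and checking the optimizer stays in $[0,1]$, is where the real verification lies.
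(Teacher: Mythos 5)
Your proof is correct, but it certifies the value by a genuinely different mechanism than the paper. The paper exhibits two explicit objects and invokes only weak duality: a fractional packing assigning weight $2^{-j}$ to each triangle of ``height'' $j$ (there are $5\cdot 10^{k-j}$ of them), and a recursive fractional transversal $g_{k,a}$, parameterized by the value $a$ on the terminal edge, whose total value is computed to be $\frac{5}{2^k}\bigl(\frac{20^k-1}{19}\bigr)+\frac{a}{2^k}$; since the packing and the transversal $g_{k,0}$ have equal value, both LPs are solved. Interestingly, your transversal side coincides with the paper's construction: your optimal point for the inner wheel LP ($t_{r_1}=s$, the other rims $0$, spokes $\frac{1-s}{2},\frac{1-s}{2},\frac{1+s}{2},\frac{1-s}{2},\frac{1+s}{2}$) is exactly the recursive assignment defining $g_{k,a}$, and your affine tradeoff function $C_k(s)=A_k+2^{-k}s$ is exactly the value $\phi(g_{k,a})$ the paper computes. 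The real divergence is on the lower-bound side: where the paper guesses a fractional packing, you prove optimality by induction on the parametric covering LP itself, solving the ten-variable wheel LP exactly with the dual certificate of weight $\tfrac12$ per slice (I checked: summing the five slice constraints with weight $\tfrac12$ gives $\sum_j t_j \ge \tfrac{s}{2}+\tfrac52$, matching your primal point). This buys a ``packing-free'' argument that determines $\tau^*(G_k)$ from scratch by dynamic programming, at the cost of needing strong LP duality for $\nu^*=\tau^*$ -- a legitimate step, since that identity is already recorded in the paper's chain $\tau_w \ge \tau_w^* = \nu_w^* \ge \nu_w$, whereas the paper's two-sided construction needs only weak duality. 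Your proof also makes explicit two structural facts the paper leaves implicit (every triangle of $G_k$ is internal to one of the ten copies or is one of the five slices $uv_iv_{i+1}$, and edge values may be truncated to $[0,1]$), both of which you justify adequately; the affineness induction, the recursions $B_k=\tfrac12 B_{k-1}$, $A_k=10A_{k-1}+\tfrac52 B_{k-1}$, and the final computation are all sound.
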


\begin{proof}

For every triangle $t$ in $G_k$ we define the \DEF{height} of $t$ to be the smallest integer $i$ so that $t$ appears in a copy of $G_{i}$ used in the recursive construction.  It is straightforward to verify that for $1\le j \le k$ the graph $G_k$ has exactly $5 \cdot 10^{k-j}$ triangles of height $j$.  We now define the function $f_k$ on the triangles $\mathcal{T}_k$ of $G_k$ by the rule that $f_k(t) = 2^{-j}$ where $j$ is the height of the triangle $t$. We claim that $f_k$ is a fractional packing.
To this end, note that if an edge $e$ appears in a copy of $G_i$ but is not its terminal edge, then it will not appear in any triangles of height greater than $i$. The only way that $e$ can appear in two triangles of height $i$ is if it is the terminal edge of a copy of $G_{i-1}$ that was placed on a spoke of $W_5$ to form $G_i$ (in which case it is certainly not the terminal edge of $G_i$). So,
$$\sum_{t\in \mathcal{T}_k:e\in t}f_k(t)\leq 2^{-1}+2^{-2}+\cdots +2\cdot 2^{-k}=1.$$
Thus, $f_k$ is a fractional packing with value
\[ \sum_{j=1}^k \frac{5}{2^j}  10^{k-j} = \frac{5}{2^k} \sum_{j=1}^k 20^{k-j} = \frac{5}{2^k} \left( \frac{20^k - 1}{19} \right). \]

Next, for every real number $0 \le a \le 1$, we define the function $g_{k,a}$ on the edges of $G_k$.  We let $g_{0,a}$ be the function that assigns the only edge of $G_0$ the value $a$, and for $k \ge 0$ we define $g_{k+1,a}$ recursively as shown in Figure~\ref{fig:trans}.

\begin{figure}[htbp]
  \centering
  \includegraphics[height=5.6cm]{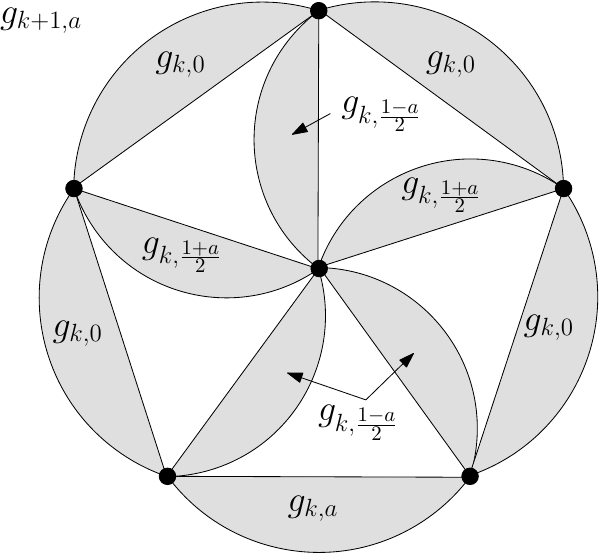}
  \caption{A fractional transversal on $G_{k+1}$.}
  \label{fig:trans}
\end{figure}

Note that the terminal edge of $G_k$ is always assigned $a$ under $g_{k,a}$. From this it is immediate that for every triangle of height $k$, the sum of $g_{k,a}$ over its edges is equal to one.  Then, recursively, the same property holds for every triangle in $G_k$, so $g_{k,a}$ is a fractional transversal.  We claim that the value of $g_{k,a}$ is equal to $\frac{5}{2^k} ( \frac{20^k - 1}{19}) + \frac{a}{2^k}$.  This is immediate for the base case when $k=0$ and then follows inductively from the following computation (here we use $\phi(g_{k,a})$ to denote the value of this fractional transversal):
\begin{align*}
\phi(g_{k,a})
 &= \phi(g_{k-1,a}) + 4 \,\phi(g_{k-1,0}) + 3\, \phi(g_{k-1, \frac{1-a}{2}}) + 2 \,\phi(g_{k-1, \frac{1+a}{2}})  \\
 &= 10 \cdot \frac{5}{2^{k-1}} \left( \frac{20^{k-1}-1}{19}\right) + \frac{ a + 3 \tfrac{1-a}{2} + 2 \tfrac{1+a}{2} }{2^{k-1}} \\
 &= 20 \cdot \frac{5}{2^k} \left( \frac{20^{k-1} - 1}{19} \right) + \frac{5}{2^k} + \frac{a}{2^k} \\
 &= \frac{5}{2^k} \left( \frac{20^k - 1}{19} \right) + \frac{a}{2^k}\,.
 \end{align*}
We now have that $g_{k,0}$ is a fractional transversal of $G_k$ with value $\frac{5}{2^k} ( \frac{20^k - 1}{19} )$ which matches the value of our fractional packing, thus completing the proof.
\end{proof}

\bigskip

\begin{proof}[Proof of Theorem~\ref{thm:integrality}]
It follows from Lemma \ref{lem:tau*(G_k)} that $2^k \tau^*(G_k)$ is an odd integer for every $k \in {\mathbb N}$.  It follows easily from this that
$\tfrac{1}{m}\,\tau_m(G_k) \neq \tau^*(G_k)$ and $\tfrac{1}{m}\,\nu_m(G_k) \neq \nu^*(G_k)$ whenever $m < 2^k$.
\end{proof}

\subsection*{Acknowledgements}
We would like to thank the anonymous referee who found a mistake in the proof of Theorem \ref{thm:wKriv} and offered several suggestions which improved the presentation of the paper.

\bibliographystyle{plain}

\end{document}